\documentclass[11pt]{amsart}

\usepackage[T1]{fontenc}
\usepackage{lmodern}
\usepackage{microtype}
\usepackage{mathtools}
\usepackage{amssymb}
\usepackage[colorlinks=true,linkcolor=blue,citecolor=blue,urlcolor=blue]{hyperref}
\hypersetup{
  pdftitle={Canonical Expansions of R-Q-Germs},
  pdfauthor={Mostafa Mirabi},
  pdfsubject={O-minimality, generalized power series, and Dulac transition maps}
}
\newcommand{\R}{\mathbb R}
\newcommand{\C}{\mathbb C}
\newcommand{\N}{\mathbb N}
\newcommand{\RQ}{\mathbb R_{\mathcal Q}}
\newcommand{\Ran}{\mathbb R_{\mathrm{an}}}
\newcommand{\RanR}{\mathbb R_{\mathrm{an}}^{\mathbb R}}
\newcommand{\Ranstar}{\mathbb R_{\mathrm{an}^{*}}}
\newcommand{\Lsurf}{\mathbb L}
\newcommand{\supp}{\operatorname{supp}}
\newcommand{\val}{\operatorname{val}}
\newcommand{\lc}{\operatorname{lc}}
\newcommand{\norm}[1]{\lVert #1\rVert}
\newcommand{\Tcan}{\mathfrak T}
\newcommand{\Hfield}{\mathcal H_{\mathcal Q}}
\newcommand{\Hconv}{\mathcal H_{\mathcal Q}^{\mathrm{conv}}}
\newcommand{\Gnat}{\R((X^{\R}))_{\mathrm{nat}}}
\newcommand{\Pnat}{\R[[X^{\R_{\geq 0}}]]_{\mathrm{nat}}}

\newtheorem{theorem}{Theorem}[section]
\newtheorem{proposition}[theorem]{Proposition}
\newtheorem{lemma}[theorem]{Lemma}
\newtheorem{corollary}[theorem]{Corollary}
\theoremstyle{definition}
\newtheorem{definition}[theorem]{Definition}
\theoremstyle{remark}
\newtheorem{remark}[theorem]{Remark}

\title[Canonical Expansions of $\mathbb R_{\mathcal Q}$-Germs]
{Canonical Expansions of $\mathbb R_{\mathcal Q}$-Germs}

\author{Mostafa Mirabi}
\address{\newline The Taft School, Watertown, CT 06795, USA and \newline  Wesleyan University, Middletown, CT 06459, USA}
\email{mmirabi@wesleyan.edu}
\urladdr{https://sites.google.com/site/mostafamirabi}

\subjclass[2020]{Primary 30E15; Secondary 03C64, 30D60}
\keywords{o-minimality, branching structures, generalized power series, Dulac series, hyperbolic saddles}

\begin{document}

\begin{abstract}
Let $\Hfield$ be the field of unary germs at $0^+$ definable in the
quasianalytic o-minimal structure $\RQ$ of Kaiser--Rolin--Speissegger.  We
construct a canonical ordered differential-field embedding of $\Hfield$ into
the field of natural-support generalized Laurent series.  Every unary germ
has a unique such asymptotic expansion, and bounded germs have expansions
with only nonnegative exponents.  A germ admits a convergent generalized
Laurent-series representation exactly when its canonical expansion converges;
in that case, every convergent representation with well-ordered support is
canonical.

We also prove that $\RQ$ is branching in the sense of
Dembner~\cite{Dembner}.  A nonresonant hyperbolic saddle with divergent Dulac
series yields a bounded positive real-analytic $\RQ$-definable germ whose
canonical generalized power series diverges.  This gives a negative answer to a question of Dembner asking whether every unary germ definable in a branching structure admits a convergent generalized
power-series representation.  Nevertheless, every bounded unary $\RQ$-germ has a canonical formal
generalized power-series expansion.  Consequently, the convergent germs form
a proper ordered differential subfield of $\Hfield$.
\end{abstract}

\maketitle



\section{Introduction}

Let $\mathcal S$ be an o-minimal expansion of the real field.  Following
Dembner~\cite[Definition~3.10]{Dembner}, we call $\mathcal S$
\emph{branching} if every $\mathcal S$-definable function
$f:(0,\varepsilon)\to\R$ extends, after shrinking the interval, to an
$\mathcal S$-definable holomorphic function on a principal slit disk
$\Delta_\delta:=D(0,\delta)\setminus(-\delta,0].$ 
Dembner proved that several standard polynomially bounded structures are
branching and asked whether every unary germ definable in a branching
structure has a generalized power-series expansion at the
origin~\cite[Question~3.17]{Dembner}.  The formulation does not distinguish
a formal asymptotic expansion from a convergent representation.  The two
notions have different answers in the quasianalytic structure $\RQ$ of
Kaiser--Rolin--Speissegger~\cite{KRS}.

Write $\Hfield$ for the ordered field of germs at $0^+$ of unary
$\RQ$-definable real functions.  The unary preparation theorem and the
quasianalytic expansion map of~\cite{KRS} combine to give each germ a
canonical generalized Laurent expansion.  The support is \emph{natural}:
it meets every interval $(-\infty,B]$ in a finite set.  We write $\Gnat$ for
the field of formal generalized Laurent series with natural support.

There are two distinct questions.  First, does a germ have a formal
natural-support asymptotic expansion?  Second, is it represented by a
series that converges in coefficient norm, that is, by a series
$F=\sum c_\alpha X^\alpha$ for which
\[
  \sum_\alpha |c_\alpha|\rho^\alpha<\infty
\]
for some $\rho>0$?  The first question has a positive answer for every
unary $\RQ$-germ.  The second has an exact criterion, but not every
canonical expansion satisfies it.

The construction is related to Speissegger's quasianalytic Ilyashenko
algebras~\cite{Speissegger}, which give logarithmic generalized
power-series expansions in a differential Hardy field.  It is also related
to the transasymptotic embedding of Rolin--Servi--Speissegger~\cite{RSS}.
Their target is the full field of transseries and allows logarithmic and
exponential transmonomials.  Here the target is the pure-power field
$\Gnat$, and the main points are canonicity, the exact convergence locus,
and the application to branching structures.

Our first result is the following.

\begingroup
\renewcommand{\thetheorem}{A}
\begin{theorem}[Theorem~\ref{thm:canonical-map}, Corollary~\ref{cor:bounded-formal}, Theorem~\ref{thm:convergence-criterion}, Proposition~\ref{prop:convergent-locus}, and Corollary~\ref{cor:proper-convergent-locus}]\label{thm:A}\label{thm:A}
There is a canonical injective ordered differential-field homomorphism
$
  \Tcan:\Hfield\longrightarrow\Gnat
$
with the following properties.
\begin{enumerate}
\item For every $f\in\Hfield$, the series $\Tcan(f)$ is the unique
natural-support generalized Laurent asymptotic expansion of $f$.
\item A germ $f\in\Hfield$ admits a convergent generalized Laurent-series
representation if and only if $\Tcan(f)$ converges at some positive radius.
In that case every convergent representation with well-ordered support is
identical to $\Tcan(f)$.
\item If $f$ is bounded near $0^+$, then $\Tcan(f)$ has only nonnegative
exponents.  Hence every bounded unary $\RQ$-definable germ has a unique
formal natural-support generalized power-series expansion.
\item The set $
  \Hconv:=\{f\in\Hfield:\Tcan(f)\text{ converges at some positive radius}\} $
is a proper ordered differential subfield of $\Hfield$.
\end{enumerate}
\end{theorem}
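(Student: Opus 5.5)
Theorem~\ref{thm:A} assembles several results, and I plan to prove them in the order (1), (3), (2), (4).

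\textbf{Construction of $\Tcan$ and part (1).} Let $f\in\Hfield$. I will apply the unary preparation theorem of~\cite{KRS}. After shrinking the interval, it writes
\[
  f(x)=x^{r}\,u\bigl(x^{1/q}\bigr)
\]
on $(0,\varepsilon)$, where $r\in\R$, $q\in\N$, and $u$ is a unit of the quasianalytic class $\mathcal Q$ (or $f$ is the zero germ). The quasianalytic Taylor map of~\cite{KRS} is injective and compatible with the algebra operations and with derivatives. Applying it to $u$ gives a formal series in $X^{1/q}$, and multiplying by $X^r$ gives a series in $\Gnat$. Its support lies in $r+\tfrac1q\N$ and is therefore natural. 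This series is an asymptotic expansion of $f$, because the Taylor map of a $\mathcal Q$-function is asymptotic.

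For uniqueness, let two natural-support series both be asymptotic to $f$. Their difference $D$ is asymptotic to $0$. If $D\neq 0$, its leading monomial $cX^\alpha$ with $c\neq0$ would force $|f-f|\sim |c|x^\alpha$, which is impossible. This also shows that $\Tcan$ is well defined, independently of the chosen preparation.

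The algebraic properties then follow from uniqueness of asymptotic expansions:
\begin{itemize}
\item Sums and products of asymptotic expansions are asymptotic expansions of the sum and product. Here one uses that the supports are natural, so only finitely many terms lie below any bound.
\item For differentiation, I will use that the derivative of $x^r u(x^{1/q})$ is again a prepared germ with the termwise derivative as its expansion. This relies on differentiability of the Taylor map for $\mathcal Q$.
\item Injectivity follows from quasianalyticity: if $\Tcan(f)=0$, then $\Tcan(u)=0$, so $u=0$, which contradicts $u$ being a unit unless $f=0$.
\item The map preserves order because the sign of $f$ near $0^+$ equals the sign of the leading coefficient.
\end{itemize}

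\textbf{Part (3).} If $f$ is bounded, its leading exponent $r$ satisfies $r\ge0$, since $f\sim c\,x^r$. The support is contained in $r+\tfrac1q\N$, so every exponent is nonnegative. Uniqueness of the formal expansion is then part (1).

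\textbf{Part (2).} Suppose $F=\sum c_\alpha X^\alpha$ has well-ordered support, converges in coefficient norm at some radius $\rho$, and represents $f$ on $(0,\rho)$. On any interval $(0,\rho')$ with $\rho'<\rho$, absolute convergence gives uniform control of the tail. Hence for each exponent $\beta$,
\[
  f-\sum_{\alpha\le\beta}c_\alpha x^\alpha=O\!\left(x^{\beta'}\right),
\]
where $\beta'$ is the successor exponent, which exists because the support is well-ordered. So $F$ is an asymptotic expansion of $f$.

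The main obstacle is that uniqueness in part (1) is stated for natural supports, whereas $F$ is only assumed well ordered. I will extend the leading-term argument to well-ordered supports. The difference of two well-ordered asymptotic series has a least exponent, and its coefficient must vanish; transfinite induction on the support then shows that the series agree. A subtlety is that "asymptotic" for a convergent $F$ with possibly accumulating exponents must be handled through the tail estimate above rather than through finite truncations. Once this is done, $F=\Tcan(f)$, so $\Tcan(f)$ converges. The converse requires showing that if $\Tcan(f)$ converges at $\rho$, its sum equals $f$. I will prove this by writing the sum as $x^r\tilde u(x^{1/q})$, where $\tilde u$ is a convergent power series and hence real analytic. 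Then $\tilde u-u$ lies in $\mathcal Q$, because $\mathcal Q$ contains convergent series, and it has zero Taylor series. By quasianalyticity, $\tilde u=u$.

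\textbf{Part (4).} Closure of $\Hconv$ under the field operations holds because convergent series in coefficient norm are closed under sums and products. Closure under inverses follows from the prepared form $X^r\cdot(\text{unit})$ and the fact that the reciprocal of a convergent unit power series converges. Closure under derivatives holds because termwise differentiation of a convergent series still converges at any smaller radius. Since $\Tcan$ is a homomorphism, $\Hconv$ is an ordered differential subfield.

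Properness is the branching application. It requires a germ, coming from a Dulac map of a nonresonant hyperbolic saddle with divergent Dulac series, that is $\RQ$-definable and whose canonical expansion equals that divergent Dulac series. I will cite the later section of the paper, which identifies the Dulac series with the quasianalytic Taylor image.
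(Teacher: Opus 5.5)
\textbf{Gap: the preparation form and the converse of part (2).} Your overall architecture matches the paper's, but the form you give for the preparation theorem is wrong for $\RQ$, and this breaks the converse half of part (2).

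The KRS preparation is $f=x^rg$ with $g\in\mathcal Q$ and $g(0)\neq0$. The $\mathcal Q$-expansion $Tg$ lies in $\Pnat$: its exponents are arbitrary nonnegative reals forming a natural set, not elements of $\tfrac1q\N$. For example, $1+x^{\sqrt2}$ is $\RQ$-definable. By uniqueness of natural asymptotic expansions, it cannot equal $x^ru(x^{1/q})$ with $u$ having an ordinary Taylor expansion. The nonresonant Dulac germ you invoke for properness also generally involves exponents generated by $1$ and an irrational eigenvalue ratio.

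\textbf{What survives.} Parts (1), (3), and the closure statements in (4) survive once you replace ``support in $r+\tfrac1q\N$'' by ``$Tg\in\Pnat$''.
\begin{itemize}
\item For (3), use $\supp(X^rTg)\subseteq[r,\infty)$ with $r\geq0$.
\item For inverses in (4), write $F=aX^\lambda(1+H)$ and use $\norm{H}_r\leq(r/R)^{\val(H)}\norm{H}_R\to0$, rather than facts about ordinary power series.
\end{itemize}
Your derivation of the homomorphism property from uniqueness of natural asymptotic expansions is a legitimate alternative to the paper's route, which uses the algebra homomorphism $T$ and re-preparation of $g+k$. For derivatives you still need $T(\partial g)=\partial_XTg$ from KRS, as you say.

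\textbf{The step that fails.} The real gap is showing that if $\Tcan(f)=X^rTg$ converges, then its sum is $f$. You argue that the sum $\tilde u$ is an ordinary convergent power series, hence real analytic, hence in $\mathcal Q$, so quasianalyticity gives $\tilde u=u$. But the sum $G$ of $Tg$ is a convergent generalized series such as $\sum a_nx^{\mu_n}$ with irrational $\mu_n$. It is not analytic at $0$, and its membership in $\mathcal Q$ is exactly what must be proved. Without that membership, two germs with the same asymptotic expansion need not coincide, so injectivity of $T$ cannot be applied.

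\textbf{How the paper closes it.} The paper supplies the missing fact in Lemma~\ref{lem:convergent-in-Q}, in four steps.
\begin{enumerate}
\item Define $G(z)=\sum c_\alpha z^\alpha$ with $z^\alpha=\exp(\alpha\log z)$ on the logarithmic disk $0<|z|<\rho$ of $\Lsurf$. There it converges normally, since $|z^\alpha|=|z|^\alpha$.
\item On a standard quadratic domain $W_{c,C}$ with $c<\rho$, the tail estimate gives a uniform natural-support asymptotic expansion.
\item Hence $G\in\mathcal A_1^1(W_{c,C})=\mathcal Q_1^1(W_{c,C})$ by \cite[Remark~5.2]{KRS}, with $TG=Tg$.
\item Only then does injectivity of $T$ give $G=g$, so that $f=x^rG$.
\end{enumerate}
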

\endgroup
The second result supplie the divergent germ needed to prove properness.

\begingroup
\renewcommand{\thetheorem}{B}
\begin{theorem}[Theorem~\ref{thm:RQ-branching} and Proposition~\ref{prop:divergent-transition}]\label{thm:B}
The structure $\RQ$ is branching.  There is a bounded positive real-analytic
$\RQ$-definable germ
  $d:(0,\varepsilon)\longrightarrow(0,\varepsilon'),
  $ with $ d(x)\longrightarrow0,$ 
whose canonical generalized power series $\Tcan(d)$ diverges.  Thus $d$
has no convergent generalized power-series representation, even with
arbitrary well-ordered support.
\end{theorem}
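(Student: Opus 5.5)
The proof splits into two independent parts: that $\RQ$ is branching, and the construction of the divergent germ $d$.

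\emph{Branching.} The plan is to combine the unary preparation theorem of \cite{KRS} with the complex-analytic nature of the generating quasianalytic classes. Let $f:(0,\varepsilon)\to\R$ be $\RQ$-definable. Unary preparation writes the germ of $f$, after shrinking, as
\[
  f(x)=c\,x^{r}\,U(x^{1/q}),
\]
with $c\in\R$, $r\in\R$ and $U$ a unit in the quasianalytic algebra $\mathcal Q_1$ evaluated at a rational power of $x$. One could instead start from a finite composition of such prepared pieces, each built from generating functions and definable functions of lower complexity; the single-piece case is the essential one. The key input is that the generating functions of \cite{KRS} extend holomorphically to standard quadratic domains, in the sense of Ilyashenko. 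Their germs at $0$, viewed in the logarithmic chart $\zeta=-\log z$, are holomorphic on sectors of the Riemann surface of the logarithm. Any such sector contains the principal slit disk $\Delta_\delta$ for small $\delta$. The same holds for $x\mapsto x^{r}$ and $x\mapsto x^{1/q}$, which are holomorphic on $\Delta_\delta$ via the principal branch. Hence $f$ extends holomorphically to $\Delta_\delta$.

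\emph{Definability of the extension.} It remains to show the extension is $\RQ$-definable, viewed as a map on $\R^2$. For this I would use that the holomorphic extensions of the generators restricted to bounded subsectors are themselves definable. This is how \cite{KRS} obtains the Dulac maps, and I would cite the corresponding statement there. Closure under composition then finishes the argument.

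\emph{The divergent germ.} Take a real-analytic vector field with a nonresonant hyperbolic saddle at the origin, with ratio of eigenvalues $\lambda\notin\mathbb Q$, chosen so that its Dulac series diverges. Such examples exist in the literature; alternatively, one can choose $\lambda$ Liouville so that small denominators in the normalizing transformation make the coefficients grow. Let $d$ be the Dulac transition map between transversals. By \cite{KRS}, $d$ is $\RQ$-definable. It is positive, real-analytic on $(0,\varepsilon)$, and satisfies $d(x)\sim c\,x^{\lambda}\to0$. By Theorem~A(1), $\Tcan(d)$ is the unique natural-support asymptotic expansion. The Dulac series is such an expansion, since in the nonresonant case it has no logarithms and its support is natural. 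So $\Tcan(d)$ equals the divergent Dulac series. Theorem~A(2) then excludes any convergent representation with well-ordered support.

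\emph{Main obstacle.} The hardest step is guaranteeing divergence in the nonresonant case. The plan is to construct the saddle explicitly, with its transition map computed through a linearizing change of coordinates whose divergence is forced by small denominators. I would check that this divergence propagates to the coefficients of the Dulac series rather than cancelling out.
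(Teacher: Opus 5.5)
You have a genuine gap at the step the paper identifies as the only technical point of the branching argument: \emph{definability} of the holomorphic extension. Holomorphy on $\Delta_\delta$ does follow from the quadratic domain. However, $\RQ$ is generated by restrictions of $\mathcal Q$-functions to real arguments, so the values $\Phi(re^{i\theta})$ with $\theta\neq0$ are not among the generators, and holomorphy alone gives nothing. For instance, $x\mapsto e^{-1/x}$ is definable in $\R_{\exp}$, yet $\operatorname{Re}e^{-1/z}$ has infinitely many zeros on $\{it:0<t<\delta\}\subseteq\Delta_\delta$. The statement you propose to cite is not something Kaiser--Rolin--Speissegger supply. They only need the Dulac germ on the positive axis, where membership in $\mathcal Q$ makes it definable by construction.

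The paper fills this in Lemma~\ref{lem:principal}. After adding a dummy variable, Kaiser's regular blow-up $\mathbf r^{1,\lambda_a}$ at $\lambda_a=e^{ia}$, composed with $h_a(z)=e^{i(z+a)}-e^{ia}$, turns a small angular sector around $\arg z=a$ into an ordinary analytic variable of the KRS mixed class. The conjugate $G_a^*$ then makes $(r,\varphi)\mapsto\operatorname{Re}\Phi(re^{i(a+\varphi)})$ and $\operatorname{Im}\Phi(re^{i(a+\varphi)})$ definable on small boxes. Finitely many such sectors cover $[0,\pi]$, and the identity $\Phi(r,-\theta)=\overline{\Phi(r,\theta)}$ handles the lower half-disk. ``Closure under composition'' cannot replace this, because what is in question is the definability of the generators themselves off the real axis.

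Your treatment of the divergent germ also needs repair. Citing the literature is what the paper does, but your fallback mechanism fails: a transition map depends only on the orbit foliation, so divergence of a linearizing transformation caused by small divisors $n-\lambda m$ need not reach the Dulac series. Take $L=x\partial_x-\lambda y\partial_y$ with $\lambda$ Liouville and a suitable analytic $h$ with $h(0)=0$. The unique formal linearization of $(1+h)L$ tangent to the identity is $p\mapsto\varphi^L_{a(p)}(p)$ with $La=-h/(1+h)$, and it diverges. Yet $(1+h)L$ has the same orbits as $L$, so its transition map is a monomial.

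What you need is a saddle that is not \emph{orbitally} analytically equivalent to its normal form, plus a theorem turning this into divergence of the Dulac series. The paper uses Trifonov's Main Local Theorem~1 with his realization and localization results, applied to Ilyashenko's small-divisor examples. Moreover, Kaiser--Rolin--Speissegger place $d$ in $\mathcal Q$ only for transversal coordinates satisfying their condition~(D). You must therefore also know that divergence is independent of the analytic transversal coordinates, via Trifonov's localization or Theorem~\ref{thm:coordinate-invariance}. The rest of your argument matches the paper: $\Tcan(d)$ is the nonresonant Dulac series by uniqueness of natural asymptotic expansions, and Theorem~\ref{thm:A}(2) excludes convergent representations.
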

\endgroup

The germ in Theorem~\ref{thm:B} is a local transition map at a
nonresonant hyperbolic saddle.  The classical obstruction is orbital: a
transition map depends on the orbit foliation, not on the time
parametrization of the vector field.  We use Trifonov's theorem for the
Dulac series of a separatrix-loop monodromy, his realization theorem, and
his localization argument showing that the loop monodromy and the local
saddle transition have convergent Dulac series simultaneously
\cite{Trifonov}.  We also prove that convergent generalized power-series
representability is unchanged by analytic changes of transversal
coordinates, so the choice of charts does not affect the argument.

Theorem~\ref{thm:B} gives a negative answer to the convergent form of
Dembner's question.  The counterexample tends to zero, so the conclusion is
independent of whether negative exponents are allowed.  The formal form of
the question is positive for bounded unary germs in $\RQ$.  Our argument
does not address arbitrary branching structures.

Section~\ref{sec:natural} develops the elementary algebra and convergence
theory of natural-support series.  Sections~\ref{sec:canonical} and
\ref{sec:criterion} construct the canonical map and prove the convergence
criterion.  Section~\ref{sec:branching} proves that $\RQ$ is branching.
Sections~\ref{sec:coordinates} and~\ref{sec:saddles} treat analytic
coordinate changes and the divergent saddle.  The final section records
consequences for Dembner's question, comparisons of o-minimal structures,
and definable complex curves.


\section{Natural-support generalized Laurent series}\label{sec:natural}

Throughout the paper, we write $\N=\{0,1,2,\dots\}$.
\subsection{Formal algebra}

\begin{definition}
A set $A\subseteq\R$ is \emph{natural} if $A\cap(-\infty,B]$ is finite for every $B\in\R$.  A formal sum $F(X)=\sum_{\alpha\in A}c_\alpha X^\alpha$
with natural support is called a \emph{natural-support generalized Laurent
series}.  We denote the collection of these series by $\Gnat$.  The
subcollection supported in $[0,\infty)$ is denoted by $\Pnat$.
\end{definition}

Every nonempty natural set has a least element.  If it is infinite, it can be
enumerated as a strictly increasing sequence tending to $+\infty$.  The sum
and product of two formal series are defined in the usual way.

\begin{lemma}\label{lem:natural-supports}
Let $A,B\subseteq\R$ be natural.
\begin{enumerate}
\item The union $A\cup B$ and the Minkowski sum $A+B$ are natural.
\item If $A\subseteq(0,\infty)$ is nonempty, then the additive monoid
$\langle A\rangle$ generated by $A$ is natural.
\end{enumerate}
\end{lemma}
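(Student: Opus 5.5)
The plan is to verify the finiteness condition directly: for every bound $C\in\R$ we show that the relevant set meets $(-\infty,C]$ in only finitely many points.

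\emph{Part (1).} The union is immediate, since
\[
(A\cup B)\cap(-\infty,C]=\bigl(A\cap(-\infty,C]\bigr)\cup\bigl(B\cap(-\infty,C]\bigr)
\]
is a union of two finite sets. For the Minkowski sum we may assume $A$ and $B$ are nonempty, since otherwise $A+B=\emptyset$. A nonempty natural set has a least element, so let $a_0=\min A$ and $b_0=\min B$. If $a+b\le C$ with $a\in A$ and $b\in B$, then $a\le C-b_0$ and $b\le C-a_0$. Hence $a$ lies in the finite set $A\cap(-\infty,C-b_0]$ and $b$ lies in the finite set $B\cap(-\infty,C-a_0]$. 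Consequently $(A+B)\cap(-\infty,C]$ is contained in the sumset of two finite sets, which is finite.

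\emph{Part (2).} The key observation is that $A$ is natural and contained in $(0,\infty)$, so it has a least element $m=\min A>0$. Every element of $\langle A\rangle$ has the form $a_1+\dots+a_k$ with $k\in\N$ and $a_i\in A$. If such a sum is at most $C$, then two constraints follow.
\begin{itemize}
\item[(i)] Since each $a_i\ge m$, the number of summands satisfies $km\le C$, so $k\le C/m$.
\item[(ii)] Since every summand is positive, each $a_i\le C$, so each $a_i$ lies in the finite set $A_C:=A\cap(-\infty,C]$.
\end{itemize}
Therefore $\langle A\rangle\cap(-\infty,C]$ is contained in the finite union
\[
\bigcup_{k\le C/m}\underbrace{A_C+\dots+A_C}_{k},
\]
with the empty sum $\{0\}$ included for $k=0$, and this union is finite.

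No step here is a real obstacle. The only point needing care is the strict positivity $m>0$ in part (2): it is what bounds the number of summands, and without it the statement fails (for example when $0\in A$, or when $A$ contains negative elements).
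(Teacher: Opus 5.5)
Your proof is correct and follows the paper's argument essentially line by line: the minima $a_0,b_0$ bound each summand of the Minkowski sum, and for the monoid the bound $k\le C/m$ on the number of summands, together with the finiteness of $A\cap(-\infty,C]$, gives finiteness. One small correction to your closing remark: the statement does not fail when $0\in A$ and $A\subseteq[0,\infty)$, because zero summands can be discarded and $\langle A\rangle=\langle A\setminus\{0\}\rangle$, so $m>0$ is needed only for your bound on the number of summands and not for the conclusion; the statement genuinely fails only when $A$ has negative elements, as with $A=\{-1\}$.
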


\begin{proof}
The assertion for the union is immediate, and the empty cases for the
Minkowski sum are trivial.  Assume therefore that $A$ and $B$ are nonempty,
and put $a_0=\min A$ and $b_0=\min B$.  If $a+b\leq C$, then
$a\leq C-b_0$ and $b\leq C-a_0$; only finitely many pairs can occur.  Thus
$A+B$ is natural.

For the second statement, let $\delta=\min A>0$.  There is nothing to prove
when $C<0$.  A sum of elements of $A$ which is at most $C\geq0$ has length
at most $C/\delta$, and only elements of the finite set $A\cap(0,C]$ may
occur.  Hence there are only finitely many such sums.
\end{proof}

\begin{proposition}
\label{prop:natural-field}
With the usual formal operations, $\Gnat$ is a field and $\Pnat$ is a
subring.  Every nonzero $F\in\Gnat$ has a valuation
\[
  \val(F):=\min\supp(F)
\]
and a leading coefficient $\lc(F)$, and the sign of the leading coefficient
defines an ordering which makes $\Gnat$ an ordered field.
\end{proposition}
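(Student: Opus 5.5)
The plan is to verify the ring axioms directly, construct inverses by a geometric series, and then read the order off the leading coefficient.

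\emph{Ring structure.} For closure, if $F,G\in\Gnat$ then $\supp(F+G)\subseteq\supp F\cup\supp G$, which is natural by Lemma~\ref{lem:natural-supports}(1). For the product $FG=\sum_\gamma\bigl(\sum_{\alpha+\beta=\gamma}a_\alpha b_\beta\bigr)X^\gamma$, the inner sum is finite: by the argument in the proof of Lemma~\ref{lem:natural-supports}(1), only finitely many pairs $(\alpha,\beta)\in\supp F\times\supp G$ satisfy $\alpha+\beta\le\gamma$. The support of $FG$ lies in the natural set $\supp F+\supp G$. Associativity, commutativity and distributivity reduce to identities among finite sums of coefficients, since each coefficient of a triple product involves only finitely many triples. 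Supports in $[0,\infty)$ are closed under union and Minkowski sum, so $\Pnat$ is a subring containing $1=X^0$.

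\emph{Valuation and integrality.} A nonempty natural set has a minimum, so $\val$ and $\lc$ are well defined. If $F,G\neq0$, the coefficient of $X^{\val F+\val G}$ in $FG$ is $\lc(F)\lc(G)\neq0$, because it is the only pair attaining that sum. Hence $FG\neq0$, with $\val(FG)=\val F+\val G$ and $\lc(FG)=\lc F\,\lc G$.

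\emph{Inverses.} This is the main step. Given $F\neq0$, I write $F=cX^{v}(1-E)$ with $c=\lc F$ and $v=\val F$, where $E=-\sum_{\alpha>v}(c_\alpha/c)X^{\alpha-v}$ has support $S\subseteq(0,\infty)$ natural. I then define $U=\sum_{n\ge0}E^n$. By Lemma~\ref{lem:natural-supports}(2), $\supp E^n\subseteq\langle S\rangle$, which is natural, and $\val E^n\ge n\delta$ with $\delta=\min S>0$. So for each exponent $\gamma$ only the $n\le\gamma/\delta$ contribute, and the coefficients of $U$ are finite sums. Hence $U$ is a well-defined element of $\Pnat$ with $\supp U\subseteq\langle S\rangle$. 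The same finiteness gives $(1-E)U=1$ coefficientwise, since the sum telescopes and $E^{N}$ contributes nothing below exponent $N\delta$. Then $F^{-1}=c^{-1}X^{-v}U$, and translating a natural set keeps it natural.

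\emph{Order.} I declare $F>0$ iff $\lc(F)>0$. The positive cone is closed under products by the multiplicativity of $\lc$. It is closed under sums because $\lc(F+G)$ is either $\lc F$, $\lc G$, or $\lc F+\lc G$, according to how $\val F$ and $\val G$ compare; when they are equal the sum of two positive numbers is positive, so there is no cancellation. Trichotomy is clear, since exactly one of $F$, $-F$ is positive when $F\neq0$. This gives an ordered field.
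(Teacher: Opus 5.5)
Your proposal is correct and follows the paper's proof: closure of sums and products via Lemma~\ref{lem:natural-supports}, inversion by factoring out the leading monomial and summing a geometric series supported in the natural monoid generated by a positive natural set, and the ordering by the sign of the leading coefficient. You give more detail than the paper (multiplicativity of $\lc$, the telescoping check, and the positive-cone verification); the only omission is the trivial case $E=0$, where $\min S$ is undefined and $U=1$.
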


\begin{proof}
Lemma~\ref{lem:natural-supports} makes addition and multiplication
well-defined.  Let $F\neq0$, write $\lambda=\val(F)$ and
$a=\lc(F)$, and factor
$
  F=aX^\lambda(1+H),
$
where either $H=0$ or $\supp(H)\subseteq(0,\infty)$ is natural.  Formally,
\[
  F^{-1}=a^{-1}X^{-\lambda}\sum_{n=0}^{\infty}(-H)^n.
\]
The support of the geometric series is contained in the natural monoid
generated by $\supp(H)$.  If $H\neq0$ and
$\delta=\min\supp(H)>0$, then a fixed exponent $\gamma$ can occur in $H^n$
only when $n\delta\leq\gamma$, so only finitely many values of $n$ contribute;
for each such $n$, the Cauchy product has finite coefficient sums by
Lemma~\ref{lem:natural-supports}.  Thus the inverse lies in $\Gnat$.  The
assertions about the ordering are the standard leading-term verification.
\end{proof}

For $F=\sum_\alpha c_\alpha X^\alpha\in\Gnat$, define its formal derivative by
\[
  F':=\sum_\alpha \alpha c_\alpha X^{\alpha-1},
  \qquad
  \partial_XF:=XF'=\sum_\alpha \alpha c_\alpha X^\alpha.
\]
Translation of a natural support is natural, so $F'\in\Gnat$.  The usual
formal calculation shows that $F\mapsto F'$ is a derivation of $\Gnat$.
Thus $\Gnat$ is an ordered differential field.

\subsection{Convergence and asymptotics}

For a generalized series $F=\sum c_\alpha X^\alpha$ and $\rho>0$, put
$
  \norm{F}_\rho:=\sum_\alpha |c_\alpha|\rho^\alpha.
$
We say that $F$ \emph{converges at radius $\rho$} if this sum is finite.
The definition applies both to natural supports and to arbitrary
well-ordered supports.  A convergent series defines a real function for
$0<x\leq\rho$ by absolute convergence.

\begin{lemma}\label{lem:leading}
Let $
  F(X)=\sum_{\alpha\in A}c_\alpha X^\alpha$ 
be a nonzero generalized Laurent series which converges at some radius
$\rho>0$, where $A\subseteq\R$ is well ordered.  If
$\alpha_0=\min\supp(F)$, then
$
  F(x)=c_{\alpha_0}x^{\alpha_0}+o(x^{\alpha_0})
  \,\,\,\,\,\,(x\to0^+).
$
\end{lemma}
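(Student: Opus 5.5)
The plan is to factor out the leading monomial and then bound the remainder by dominated convergence. Since $A$ is well ordered, $\alpha_0$ exists. Write
\[
  F(x)=c_{\alpha_0}x^{\alpha_0}+x^{\alpha_0}R(x),\qquad
  R(x):=\sum_{\alpha\in A,\ \alpha>\alpha_0}c_\alpha x^{\alpha-\alpha_0}
\]
for $0<x\le\rho$. This series converges absolutely: for $\alpha>\alpha_0$ we have $x^{\alpha-\alpha_0}\le\rho^{\alpha-\alpha_0}$, and $\sum_\alpha|c_\alpha|\rho^{\alpha}<\infty$ gives $\sum_{\alpha>\alpha_0}|c_\alpha|\rho^{\alpha-\alpha_0}=\rho^{-\alpha_0}\sum_{\alpha>\alpha_0}|c_\alpha|\rho^\alpha<\infty$. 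It therefore suffices to prove $R(x)\to0$ as $x\to0^+$.

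For this I use dominated convergence on the countable index set $\supp(F)$. A set of positive reals with finite weighted sum has countable support, so the counting measure on that set is appropriate. Each term satisfies $c_\alpha x^{\alpha-\alpha_0}\to0$ as $x\to0^+$, because $\alpha-\alpha_0>0$. Each term is also dominated, uniformly in $x\in(0,\rho]$, by $|c_\alpha|\rho^{\alpha-\alpha_0}$, and these bounds are summable. Hence $R(x)\to0$, which gives $F(x)=c_{\alpha_0}x^{\alpha_0}+o(x^{\alpha_0})$.

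If I wanted to avoid measure theory, I would use an $\varepsilon$-argument instead. Given $\varepsilon>0$, choose a finite set $E\subseteq\supp(F)\setminus\{\alpha_0\}$ whose complement has $\sum|c_\alpha|\rho^{\alpha-\alpha_0}<\varepsilon$. The finitely many terms in $E$ tend to $0$, and the tail stays below $\varepsilon$ for all $x\le\rho$.

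The only point needing care is that well-orderedness does not make the exponents $\alpha-\alpha_0$ bounded away from $0$. The support may accumulate at points above $\alpha_0$, and arbitrary well-ordered sets could even have infinitely many elements in $(\alpha_0,\alpha_0+1)$. For this reason I will not try a uniform bound of the form $x^{\delta}$. The dominated-convergence, or tail-truncation, argument handles this without any such bound, so I expect no genuine obstacle.
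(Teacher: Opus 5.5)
Your proof is correct, but it takes a different route from the paper's. The paper uses well-orderedness twice. It sets $\alpha_1=\min(\supp(F)\setminus\{\alpha_0\})$. For $0<x<\rho$ and $\alpha\ge\alpha_1$ one has $(x/\rho)^\alpha\le(x/\rho)^{\alpha_1}$. This gives the single explicit estimate $|F(x)-c_{\alpha_0}x^{\alpha_0}|\le\rho^{-\alpha_1}\norm{F}_\rho\, x^{\alpha_1}$, which is $o(x^{\alpha_0})$ because $\alpha_1>\alpha_0$. That argument is one line and also gives a rate, $O(x^{\alpha_1})$, which your dominated-convergence or tail-truncation argument does not.

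Your closing remark is mistaken. A well-ordered set cannot accumulate at $\alpha_0$ from above, since that would require a strictly decreasing sequence. So every $\alpha\in\supp(F)\setminus\{\alpha_0\}$ satisfies $\alpha-\alpha_0\ge\alpha_1-\alpha_0>0$. The uniform bound of the form $x^{\delta}$ that you chose to avoid works with $\delta=\alpha_1-\alpha_0$. Infinitely many exponents in $(\alpha_0,\alpha_0+1)$ are possible, but they all lie in $[\alpha_1,\alpha_0+1)$.

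What your route buys in exchange is generality. It uses only that $\alpha_0$ is the least exponent and that the series converges absolutely at radius $\rho$. It therefore also handles a support such as $\{0\}\cup\{1/n:n\ge1\}$, which has a minimum but no second-least element, and where the paper's estimate does not apply.
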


\begin{proof}
If the support consists only of $\alpha_0$, there is nothing to prove.
Otherwise let
$
  \alpha_1=\min\bigl(\supp(F)\setminus\{\alpha_0\}\bigr).
$
For $0<x<\rho$ we have
\begin{align*}
 \left|F(x)-c_{\alpha_0}x^{\alpha_0}\right|
 &\leq \sum_{\alpha\geq\alpha_1}|c_\alpha|x^\alpha \\
 &\leq (x/\rho)^{\alpha_1}
       \sum_{\alpha\geq\alpha_1}|c_\alpha|\rho^\alpha
 \leq \rho^{-\alpha_1}\norm{F}_\rho x^{\alpha_1}.
\end{align*}
Since $\alpha_1>\alpha_0$, the last quantity is
$o(x^{\alpha_0})$.
\end{proof}

\begin{definition}
Let $A\subseteq\R$ be natural and let
$F=\sum_{\alpha\in A}c_\alpha X^\alpha$.  A germ
$h:(0,\varepsilon)\to\R$ has \emph{natural-support asymptotic expansion}
$F$, written $h\sim F$, if for every $B\in\R$,
\[
  h(x)-\sum_{\substack{\alpha\in A\\ \alpha\leq B}}
  c_\alpha x^\alpha=o(x^B)
  \qquad(x\to0^+).
\]
The sum is finite by naturalness.
\end{definition}

\begin{lemma}\label{lem:formal-uniqueness}
A germ has at most one natural-support generalized Laurent asymptotic
expansion.
\end{lemma}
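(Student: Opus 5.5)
The plan is to reduce uniqueness to the vanishing of any expansion of the zero germ, and then to compare leading terms. Suppose $h\sim F$ and $h\sim G$ with $F,G\in\Gnat$. The supports $\supp(F)$ and $\supp(G)$ are natural, so $D:=F-G$ has natural support contained in $\supp(F)\cup\supp(G)$ by Lemma~\ref{lem:natural-supports}. For each $B\in\R$ the truncations of $F$ and $G$ at $B$ are finite sums. Subtracting the two defining estimates gives
\[
  \sum_{\substack{\alpha\in\supp(D)\\ \alpha\leq B}} d_\alpha x^\alpha = o(x^B)\qquad(x\to0^+).
\]
Here $d_\alpha$ are the coefficients of $D$. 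In other words, the zero germ has asymptotic expansion $D$, and it suffices to show $D=0$.

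Suppose instead that $D\neq0$. Let $\lambda=\val(D)$, which exists because nonempty natural sets have a least element (Proposition~\ref{prop:natural-field}), and put $d=d_\lambda\neq0$. Apply the displayed estimate with $B=\lambda$. The truncation then consists of the single term $d x^\lambda$, so $d x^\lambda=o(x^\lambda)$. Dividing by $x^\lambda>0$ gives $d\to0$ as $x\to0^+$, which forces $d=0$. This contradicts the choice of $\lambda$, so $D=0$ and $F=G$.

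No step is a genuine obstacle. The only points to check are that the truncation sums at a common $B$ are finite, which is guaranteed by naturalness, and that subtracting two $o(x^B)$ terms is again $o(x^B)$. Both are immediate.
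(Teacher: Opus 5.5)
Your proposal is correct and follows essentially the same argument as the paper: take the least exponent $\gamma=\val(F-G)$, compare the two asymptotic relations at order $\gamma$, and conclude that the difference of the coefficients at $X^\gamma$ satisfies $(a-b)x^\gamma=o(x^\gamma)$, which is impossible. Your preliminary step, rephrasing this as the zero germ having expansion $F-G$, is only a repackaging of the same subtraction.
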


\begin{proof}
Suppose $F$ and $G$ are two such expansions and $F\neq G$.  The support of
$F-G$ is contained in the natural set $\supp(F)\cup\supp(G)$, so it has a
least exponent $\gamma$.  Let $P$ be the common finite sum of all terms of
exponent strictly below $\gamma$, and let $a$ and $b$ be the coefficients of
$X^\gamma$ in $F$ and $G$, respectively, taking a missing coefficient to be
zero.  The asymptotic relations at order $\gamma$ give
$
  h(x)-P(x)-ax^\gamma=o(x^\gamma),
$ and $
  h(x)-P(x)-bx^\gamma=o(x^\gamma).
$
Subtracting yields $(a-b)x^\gamma=o(x^\gamma)$, contrary to $a\neq b$.
\end{proof}

\begin{proposition}\label{prop:rigidity} 
Suppose a germ $h$ has a natural-support asymptotic expansion
$\widehat h\in\Gnat$.  If $h$ is represented by a convergent generalized
Laurent series $F$ with arbitrary well-ordered support, then $F=\widehat h$ 
as formal series.  In particular, $\widehat h$ converges.
\end{proposition}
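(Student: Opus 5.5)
The plan is to compare $F$ and $\widehat h$ term by term, starting from the smallest exponent, and to obtain a contradiction at the first exponent where they differ. Assume $F\neq\widehat h$. The obstacle is that $F$ has an arbitrary well-ordered support, which may contain limit points. Its difference with $\widehat h$ is therefore only a well-ordered-support series and need not lie in $\Gnat$. We cannot subtract formally inside $\Gnat$. Instead, consider the formal difference $D=F-\widehat h$, computed coefficientwise. Its support lies in $\supp(F)\cup\supp(\widehat h)$, and the union of two well-ordered sets is well ordered. So $D\neq0$ has a least exponent $\gamma$.

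Let $a$ and $b$ be the coefficients of $X^\gamma$ in $F$ and $\widehat h$, with $a\neq b$. Below $\gamma$ the two series agree, and $\widehat h$ has only finitely many terms there by naturalness. Hence $F$ also has only finitely many terms with exponent $<\gamma$, namely the same finite sum $P$. Split $F=P+aX^\gamma+R$, where $\supp(R)\subseteq(\gamma,\infty)$ and $R$ still converges at radius $\rho$, because $\norm{R}_\rho\le\norm{F}_\rho$.

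Apply Lemma~\ref{lem:leading} to $R$, or to $aX^\gamma+R$ if $R=0$. Its leading exponent exceeds $\gamma$, so the estimate in that lemma gives $R(x)=O(x^{\alpha_1})$ with $\alpha_1>\gamma$, hence $R(x)=o(x^\gamma)$. Therefore $h(x)=F(x)=P(x)+ax^\gamma+o(x^\gamma)$. On the other hand, the definition of $h\sim\widehat h$ with $B=\gamma$ gives $h(x)=P(x)+bx^\gamma+o(x^\gamma)$. Subtracting, as in Lemma~\ref{lem:formal-uniqueness}, yields $(a-b)x^\gamma=o(x^\gamma)$, which contradicts $a\neq b$. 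Hence $F=\widehat h$. The final assertion follows at once: $\widehat h=F$ converges at the same radius $\rho$.

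The only delicate point is the one already flagged: working with the coefficientwise difference over a merely well-ordered support, and checking that the tail $R$ is convergent so that Lemma~\ref{lem:leading} applies. Both follow directly from well-ordering and $\norm{R}_\rho\le\norm{F}_\rho$.
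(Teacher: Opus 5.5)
Your proof is correct and is essentially the paper's argument: take the least exponent $\gamma$ of the well-ordered support of the coefficientwise difference, use naturalness of $\supp(\widehat h)$ to see that the common part below $\gamma$ is a finite sum $P$, and compare the order-$\gamma$ behavior of the convergent tail (via the estimate of Lemma~\ref{lem:leading}) with the defining asymptotic relation at $B=\gamma$. The only cosmetic difference is that you allow $a$ or $b$ to vanish, which merges the paper's two cases $\gamma\notin\supp(\widehat h)$ and $\gamma\in\supp(\widehat h)$ into one computation (and when $R=0$ there is simply nothing to estimate).
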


\begin{proof}
Assume $F\neq\widehat h$.  The union of two well-ordered subsets of $\R$ is
well ordered: every nonempty subset meets one of the two sets, and the
minimum of the available minima is its least element.  Hence
$\supp(F-\widehat h)$ has a least exponent $\gamma$.

All common exponents below $\gamma$ belong to the natural support of
$\widehat h$, and therefore there are only finitely many of them.  Subtract
the corresponding common finite sum from both the germ and the convergent
series $F$.  If $\gamma\notin\supp(\widehat h)$, the defining asymptotic
relation at order $\gamma$ says that the resulting germ is $o(x^\gamma)$,
whereas Lemma~\ref{lem:leading} gives a nonzero leading term of order
$x^\gamma$ from $F$.  If $\gamma\in\supp(\widehat h)$, the same comparison
shows that the coefficients at $\gamma$ must agree, again contradicting the
choice of $\gamma$.  Thus $F=\widehat h$.
\end{proof}

\subsection{Two closure lemmas for convergent natural series}

The following elementary closure properties will later control analytic
changes of transversal coordinates.

\begin{lemma}\label{lem:functional-calculus}
Let $H\in\Pnat$ be convergent and satisfy either $H=0$ or
$\val(H)>0$.  If $A(T)=\sum_{n\geq0}a_nT^n$ is an ordinary real power
series convergent near $0$, then the formal composition $
  A(H)=\sum_{n\geq0}a_nH^n$ 
is a convergent natural-support generalized power series.
\end{lemma}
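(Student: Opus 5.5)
The plan is to prove two things: that the formal composition $A(H)$ is a well-defined element of $\Pnat$, and that it converges at some radius. If $H=0$ then $A(H)=a_0$ and there is nothing to prove. So assume $H\neq0$, put $\delta=\val(H)>0$, and let $S=\supp(H)\subseteq[\delta,\infty)$, which is natural.

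\emph{Formal well-definedness.} By Lemma~\ref{lem:natural-supports}, each power $H^n$ is a well-defined element of $\Pnat$. Its support lies in the monoid $\langle S\rangle$, and every exponent of $H^n$ is at least $n\delta$. Fix an exponent $\gamma$. It can occur in $H^n$ only when $n\le\gamma/\delta$, so the coefficient of $X^\gamma$ in $\sum_n a_nH^n$ is a finite sum. The total support lies in $\langle S\rangle$, which is natural by Lemma~\ref{lem:natural-supports}(2). Hence $A(H)\in\Pnat$ is well defined, and every exponent in it is nonnegative.

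\emph{Convergence.} This is the main step, and the approach is to use the submultiplicativity of $\norm{\cdot}_\rho$. For series with nonnegative supports,
\[
\norm{FG}_\rho\le\norm{F}_\rho\norm{G}_\rho,
\]
which follows from the triangle inequality applied coefficientwise in the Cauchy product, together with $\rho^{\alpha+\beta}=\rho^\alpha\rho^\beta$ and Tonelli's theorem for rearranging nonnegative sums. Let $R>0$ be the radius of convergence of $A$. The key point is to make $\norm{H}_\rho<R$ by shrinking $\rho$. Suppose $H$ converges at radius $\rho_0$. For $\rho\le\rho_0$,
\[
\norm{H}_\rho=\sum_{\alpha\ge\delta}|c_\alpha|\rho^\alpha\le(\rho/\rho_0)^{\delta}\norm{H}_{\rho_0}.
\]
This is the same estimate as in Lemma~\ref{lem:leading}, and it tends to $0$ as $\rho\to0^+$. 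Choose $\rho$ so small that $r:=\norm{H}_\rho<R$.

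\emph{Conclusion.} Since coefficients of $A(H)$ are finite sums of coefficients of the $a_nH^n$, the triangle inequality gives
\[
\norm{A(H)}_\rho\le\sum_n|a_n|\,\norm{H^n}_\rho\le\sum_n|a_n|r^n<\infty,
\]
because $r<R$. Hence $A(H)$ converges at radius $\rho$.

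As a remark, the function defined by $A(H)$ on $(0,\rho]$ equals $A(H(x))$. This follows by absolute convergence, since the double series can be rearranged freely. This identity is not part of the statement but will presumably be needed later.
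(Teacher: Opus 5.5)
Your proposal is correct and follows essentially the same route as the paper's proof. Both arguments shrink the radius via $\norm{H}_r\le (r/R)^\delta\norm{H}_R$ so that $\norm{H}_r$ lies inside the disk of convergence of $A$. Both obtain well-definedness and naturalness from the fact that only finitely many powers $H^n$ contribute to each exponent and the support lies in the natural monoid generated by $\supp(H)$. Both conclude with $\norm{A(H)}_r\le\sum_n|a_n|\norm{H}_r^n<\infty$ by submultiplicativity.
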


\begin{proof}
The assertion is immediate when $H=0$.  Otherwise choose $R>0$ with
$\norm{H}_R<\infty$ and put $\delta=\val(H)>0$.  For $0<r<R$,
\[
  \norm{H}_r
  \leq (r/R)^\delta\norm{H}_R\longrightarrow0
  \qquad(r\to0^+).
\]
Choose $r$ so small that $\norm{H}_r$ lies inside the disk of absolute
convergence of $A$.  Since $\val(H)>0$, only finitely many powers $H^n$ contribute to any
fixed coefficient, so the formal composition is well defined.  Its support
is contained in the additive monoid generated by $\supp(H)$, together
with $0$, and is natural by Lemma~\ref{lem:natural-supports}.  Moreover,
$
  \norm{A(H)}_r
  \leq\sum_{n\geq0}|a_n|\norm{H}_r^n<\infty.
$
Absolute convergence also justifies the collection of equal exponents.
\end{proof}

\begin{lemma}\label{lem:analytic-substitution}
Let $F\in\Pnat$ be convergent and let
\[
  q(X)=aX(1+u(X)),\qquad a>0,
\]
where $u$ is an ordinary convergent power series with $u(0)=0$.  Then
$F(q(X))$ is a convergent natural-support generalized power series.  The
same conclusion, with generalized Laurent series in place of power series,
holds after factoring a leading monomial.
\end{lemma}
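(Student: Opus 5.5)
The plan is to write $F(q(X))$ as a composition of monomials with a convergent correction factor. For each exponent $\alpha$ we have
\[
  q(X)^\alpha=a^\alpha X^\alpha\bigl(1+u(X)\bigr)^\alpha ,
\]
and the whole difficulty is that infinitely many distinct exponents $\alpha$ appear. I would therefore not expand each $(1+u)^\alpha$ separately and then sum. Instead I would introduce a two-variable description.

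Write $(1+u(X))^\alpha=\exp\bigl(\alpha L(X)\bigr)$, where $L(X)=\log(1+u(X))$ is an ordinary convergent power series with $L(0)=0$. Then
\[
  F(q(X))=\sum_{\alpha}c_\alpha a^\alpha X^\alpha\sum_{k\ge0}\frac{\alpha^kL(X)^k}{k!}
  =\sum_{k\ge0}\frac{L(X)^k}{k!}\,\bigl(\partial_X^kG\bigr)(X),
\]
where $G=\sum_\alpha c_\alpha a^\alpha X^\alpha$, and $\partial_X$ is the Euler operator defined above. The argument then proceeds in the following steps.

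\textbf{Step 1.} $G$ is in $\Pnat$ with the same support as $F$. It converges at radius $a^{-1}\rho$ whenever $F$ converges at $\rho$, since $\norm{G}_{r}=\norm{F}_{ar}$.

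\textbf{Step 2 (support).} The $k$-th summand has support in $\supp(F)+\langle\N_{\ge1}\rangle$, with every power $X^m$ from $L^k$ satisfying $m\ge k$. So a fixed exponent $\gamma$ receives contributions only from $k\le\gamma$, and only from pairs $(\alpha,m)$ with $\alpha+m=\gamma$. These are finitely many by Lemma~\ref{lem:natural-supports}, and the total support lies in the natural set $\supp(F)+\N$. Hence the formal series is well defined and lies in $\Pnat$.

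\textbf{Step 3 (convergence, main obstacle).} The factor $\alpha^k$ grows, so I would bound instead the majorant double sum
\[
  \sum_\alpha\sum_k |c_\alpha|a^\alpha r^\alpha\frac{|\alpha|^k\,\widetilde L(r)^k}{k!}
  =\sum_\alpha|c_\alpha|(ar)^\alpha e^{|\alpha|\widetilde L(r)},
\]
where $\widetilde L$ is the series of absolute values of the coefficients of $L$. For $r$ small we have $e^{\widetilde L(r)}\le 1+\eta$ with $\eta$ small. Since exponents are nonnegative (the support lies in $[0,\infty)$), this is at most $\sum_\alpha|c_\alpha|\bigl(ar(1+\eta)\bigr)^\alpha=\norm{F}_{ar(1+\eta)}$. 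Choosing $r$ so that $ar(1+\eta)\le\rho$ makes this finite. Absolute convergence of the double sum justifies regrouping by exponent, so $\norm{F\circ q}_r<\infty$. It also shows that the formal composition represents the function $x\mapsto F(q(x))$ for small $x>0$, using $q(x)^\alpha=a^\alpha x^\alpha e^{\alpha L(x)}$ pointwise.

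\textbf{Laurent case.} Factor $F=X^\lambda F_0$ with $F_0\in\Pnat$ convergent. Then $F(q)=q^\lambda F_0(q)$, and $q^\lambda=a^\lambda X^\lambda(1+u)^\lambda$, where $(1+u)^\lambda=\exp(\lambda L)$ is convergent by Lemma~\ref{lem:functional-calculus}. It remains to multiply convergent natural series, using that $\norm{\cdot}_r$ is submultiplicative, which follows from the triangle inequality on the Cauchy product.
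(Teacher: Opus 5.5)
Your proposal is correct and follows essentially the same route as the paper. Both arguments write $(1+u)^\alpha=\exp(\alpha\log(1+u))$, bound $\norm{(1+u)^\alpha}_r\le e^{\alpha\norm{\log(1+u)}_r}$, shrink $r$ until $are^{\norm{\log(1+u)}_r}<\rho$, place the support in $\supp(F)+\N$, and treat the Laurent case by factoring out $X^\lambda$. Your regrouping as $\sum_k L^k\partial_X^kG/k!$ only repackages the argument and reduces to the same majorant; the one minor difference is that the paper bounds $\norm{\log(1+u)}_r$ by $-\log(1-\norm{u}_r)$, whereas you use the absolute-coefficient series of $\log(1+u)$ directly.
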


\begin{proof}
Suppose $\norm{F}_\rho<\infty$.  For an ordinary power series we use the
same coefficient norm $\norm{\cdot}_r$.  Choose $r>0$ so small that
$s:=\norm{u}_r<1$.  Then
\[
  V:=\log(1+u)=\sum_{n\geq1}\frac{(-1)^{n+1}}n u^n
\]
converges in coefficient norm and
$
  \norm{V}_r\leq -\log(1-s)=:L.
$
For every $\alpha\geq0$,
$
  (1+u)^\alpha=\exp(\alpha V)
$
is an ordinary convergent power series and
$
  \norm{(1+u)^\alpha}_r\leq e^{\alpha L}.
$
After decreasing $r$ further, we may assume $are^L<\rho$.  If
$F=\sum_{\alpha\in A}c_\alpha X^\alpha$, then
\begin{align*}
 \sum_{\alpha\in A}
 \norm{c_\alpha q(X)^\alpha}_r
 &\leq
 \sum_{\alpha\in A}|c_\alpha|(are^L)^\alpha
 <\infty.
\end{align*}
The resulting support is contained in $A+\N$, which is natural by
Lemma~\ref{lem:natural-supports}.  This proves the power-series assertion.

For the Laurent case, assume $F\neq0$ and write
$F=X^\lambda P$ with $\lambda=\val(F)$ and $P\in\Pnat$.  Then
\[
  F(q(X))=a^\lambda X^\lambda(1+u(X))^\lambda P(q(X)).
\]
The preceding argument applies to $P(q(X))$.  Moreover, the ordinary series
\[
  (1+u)^\lambda=\exp\bigl(\lambda\log(1+u)\bigr)
\]
converges in coefficient norm at a sufficiently small radius, also when
$\lambda<0$.  The product is therefore a convergent natural-support
generalized Laurent series.
\end{proof}


\section{\texorpdfstring{The canonical expansion map for $\RQ$-germs}{The canonical expansion map for R-Q-germs}}
\label{sec:canonical}

We recall the one-variable facts from~\cite{KRS} that are used below.  A
one-variable germ $g\in\mathcal Q$ has a holomorphic representative on a
quadratic domain of the Riemann surface of the logarithm and a
natural-support generalized asymptotic expansion $Tg\in\Pnat$.  The map
\[
  T:\mathcal Q\longrightarrow\C[[X^{\R_{\geq0}}]]_{\mathrm{nat}}
\]
is an injective $\C$-algebra homomorphism, and $g(0)=(Tg)(0)$
\cite[Proposition~5.11]{KRS}.  Moreover, $g$ is a unit of the
$\mathcal Q$-algebra exactly when $g(0)\neq0$
\cite[Proposition~5.12]{KRS}.  If
$\partial g:=xg'(x)$ denotes logarithmic differentiation, then
$\partial g\in\mathcal Q$ and
$
  T(\partial g)=\partial_X(Tg)
$
by~\cite[Proposition~5.8]{KRS}.  Every monomial $x^\alpha$ with
$\alpha\geq0$ belongs to the one-variable $\mathcal Q$-class and has
expansion $X^\alpha$.  For a germ that is real valued on the positive axis,
Proposition~7.3 of~\cite{KRS} shows that the coefficients of $Tg$ are real.
We therefore regard $Tg$ as an element of $\Pnat$ for the real germs
considered below.

The unary preparation theorem of Kaiser--Rolin--Speissegger states that every
nonzero unary $\RQ$-definable germ can be written
\begin{equation}
\label{eq:KRS-preparation}
  f(x)=x^r g(x),
  \qquad r\in\R,\quad g\in\mathcal Q,\quad g(0)\neq0.
\end{equation}
See~\cite[Theorem~B]{KRS}.

\begin{lemma}\label{lem:unique-preparation}
If
$
  x^r g(x)=x^s h(x)
$
near $0^+$, where $g,h\in\mathcal Q$ and $g(0)h(0)\neq0$, then
$r=s$ and $g=h$.
\end{lemma}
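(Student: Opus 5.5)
The plan is to compare leading behaviour first, which will force $r=s$, and then to conclude $g=h$ by dividing. Since $g,h\in\mathcal Q$ are continuous at $0$ with $g(x)\to g(0)\neq0$ and $h(x)\to h(0)\neq0$ as $x\to0^+$, the identity $x^r g(x)=x^s h(x)$ gives
\[
  x^{r-s}=\frac{h(x)}{g(x)}\longrightarrow\frac{h(0)}{g(0)}\in\R\setminus\{0\}
  \qquad(x\to0^+).
\]
The left side tends to $0$ if $r>s$ and to $+\infty$ if $r<s$. Both are incompatible with a finite nonzero limit, so $r=s$. This step uses only the value $g(0)=(Tg)(0)$ recorded from \cite[Proposition~5.11]{KRS}; in other words, the germs in $\mathcal Q$ extend continuously to $0$ with that value.

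With $r=s$, dividing by $x^r>0$ gives $g(x)=h(x)$ for all sufficiently small $x>0$, so $g$ and $h$ agree as real germs at $0^+$. The remaining point is that this equality of restrictions to the positive axis gives equality as elements of $\mathcal Q$. Elements of $\mathcal Q$ are germs of holomorphic functions on quadratic domains of the Riemann surface of the logarithm, and the positive half-line is a set with accumulation points inside the domain. By the identity theorem, the holomorphic extensions coincide on the connected component of the common domain, and hence $g=h$ as germs in $\mathcal Q$.

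An alternative that avoids the identity theorem is to apply the injective homomorphism $T$. From $g=h$ on $(0,\varepsilon)$, both $Tg$ and $Th$ are natural-support asymptotic expansions of the same real germ, so $Tg=Th$ by Lemma~\ref{lem:formal-uniqueness}. Injectivity of $T$ \cite[Proposition~5.11]{KRS} then gives $g=h$.

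The only delicate point is the convention that $\mathcal Q$ consists of germs determined by their behaviour on the positive axis. Either argument settles it, and I would cite the injectivity of $T$ together with Lemma~\ref{lem:formal-uniqueness} as the cleanest route.
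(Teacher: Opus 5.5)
Your proposal is correct and follows the paper's own proof: the quotient $h/g=x^{r-s}$ has the finite nonzero limit $h(0)/g(0)$, which forces $r=s$ and then $g=h$. Your added justification that equality on the positive axis gives equality in $\mathcal Q$ (identity theorem, or Lemma~\ref{lem:formal-uniqueness} plus injectivity of $T$) fills in a step the paper leaves implicit.
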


\begin{proof}
The quotient $h(x)/g(x)$ tends to the finite nonzero limit $h(0)/g(0)$,
whereas it equals $x^{r-s}$.  This forces $r=s$, and then $g=h$.
\end{proof}

Let $\Hfield$ be the field of unary $\RQ$-definable germs at $0^+$.  It is
ordered by eventual positivity.  By unary preparation, every such germ is
real analytic on a punctured interval.  Its derivative is definable, so
$\Hfield$ is closed under ordinary differentiation.

\begin{definition}
For $f\in\Hfield\setminus\{0\}$, let $f=x^rg$ be the unique preparation
\eqref{eq:KRS-preparation} and define
$
  \Tcan(f):=X^rTg\in\Gnat.
$
Set $\Tcan(0)=0$.
\end{definition}

\begin{remark}[Source of the construction]
The map $T$ on the one-variable $\mathcal Q$-class and the preparation
\eqref{eq:KRS-preparation} are results of~\cite{KRS}.  The point of the
present formulation is that uniqueness of the prepared exponent makes their
combination choice-free on the full unary definable germ field; the next
theorem records the resulting field, order, differential, asymptotic, and
uniqueness properties.
\end{remark}

\begin{theorem}\label{thm:canonical-map}
The map
$
  \Tcan:\Hfield\longrightarrow\Gnat
$ 
is an injective ordered differential-field homomorphism.  For every $f\in\Hfield$,
$\Tcan(f)$ is the unique natural-support generalized Laurent asymptotic
expansion of $f$.
\end{theorem}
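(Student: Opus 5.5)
The plan is to reduce everything to the properties of $T$ on $\mathcal Q$ recalled above, with the preparation \eqref{eq:KRS-preparation} and Lemma~\ref{lem:unique-preparation} supplying well-definedness. The first step is the asymptotic statement, because uniqueness then does much of the algebraic work. Let $f=x^rg$ with $g\in\mathcal Q$. Since $Tg$ is the natural-support asymptotic expansion of $g$ in the KRS sense, for every $B$ we have $g(x)-\sum_{\alpha\le B}c_\alpha x^\alpha=o(x^B)$. Multiplying by $x^r$ gives $f\sim X^rTg=\Tcan(f)$. Lemma~\ref{lem:formal-uniqueness} then says that $\Tcan(f)$ is \emph{the} unique natural-support expansion of $f$. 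This also gives a choice-free characterization of $\Tcan$, independent of the preparation.

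Next come the homomorphism properties, which I will derive from uniqueness. The key point is that if $f\sim F$ and $h\sim G$ with $F,G\in\Gnat$, then $fh\sim FG$ and $f+h\sim F+G$. Addition is immediate. For products, write $F=X^{a}F_0$ and $G=X^bG_0$ with $F_0,G_0\in\Pnat$. Truncating $F_0,G_0$ at a high enough order bounds the error terms by $o(x^{B-a-b})$, using that $f x^{-a}$ and $hx^{-b}$ are bounded. Alternatively, and more cleanly, I can work through the preparations: $fh=x^{r+s}gh$ with $gh\in\mathcal Q$ and $(gh)(0)\ne0$, so $\Tcan(fh)=X^{r+s}T(gh)=X^{r+s}TgTh$ because $T$ is an algebra homomorphism. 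Sums are less direct via preparation, since $x^rg+x^sh$ need not be prepared in an obvious way. For sums I will therefore use the asymptotic route: $f+h\sim\Tcan(f)+\Tcan(h)$, so uniqueness forces $\Tcan(f+h)=\Tcan(f)+\Tcan(h)$. Inverses follow from multiplicativity and $\Tcan(1)=1$. Injectivity is automatic for a field homomorphism, or directly because $\Tcan(f)\neq0$ when $f\ne0$: we have $Tg(0)=g(0)\neq0$, so the leading term of $\Tcan(f)$ is $g(0)X^r$.

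For the order I compare leading terms. If $f=x^rg>0$ near $0^+$, then $g(0)>0$, because $g(0)\ne0$ and $g\to g(0)$. The leading coefficient of $\Tcan(f)$ is $g(0)$, so $\Tcan(f)>0$ in the order of Proposition~\ref{prop:natural-field}. Together with additivity, this shows that $\Tcan$ preserves order.

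For the derivation I compute $f'=x^{r-1}(rg+xg')=x^{r-1}(rg+\partial g)$. Here $rg+\partial g\in\mathcal Q$, and by \cite[Proposition~5.8]{KRS} its image under $T$ is $rTg+\partial_XTg$. This element need not have nonzero constant term, for instance when $r=0$, so $x^{r-1}(rg+\partial g)$ need not be a preparation. I avoid this by arguing asymptotically once more. Since $x^{r-1}$ times a $\mathcal Q$-germ $k$ satisfies $x^{r-1}k\sim X^{r-1}Tk$ by the first step, we get $f'\sim X^{r-1}(rTg+\partial_XTg)$, which is exactly $(X^rTg)'$. Uniqueness then gives $\Tcan(f')=\Tcan(f)'$.

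The main obstacle is this last point, together with the additivity step: the output of the operation is not in prepared form. The remedy in both cases is to establish the relation $x^rk\sim X^rTk$ for \emph{all} $k\in\mathcal Q$, including $k(0)=0$, and then invoke Lemma~\ref{lem:formal-uniqueness}. This relies on $Tk$ being an asymptotic expansion of $k$ for every $k\in\mathcal Q$, which is part of the KRS construction.
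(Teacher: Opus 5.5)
Your proof is correct. It differs from the paper's in how additivity and compatibility with differentiation are obtained.

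The paper stays with preparations throughout. For $f=x^rg$ and $h=x^sk$ it splits into two cases. If $r<s$, then $g+x^{s-r}k$ is already a unit. If $r=s$, the paper applies the unary preparation theorem a second time, to the bounded germ $g+k$. It does the same for $rg+\partial g$ in the derivative computation. In each case it writes the germ as $x^tu$ with $t\geq0$ and $u$ a $\mathcal Q$-unit, and uses $T(x^tu)=X^tTu$ to identify $\Tcan$ of the sum or derivative.

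You instead observe that $x^rk\sim X^rTk$ holds for every $k\in\mathcal Q$, unit or not. Lemma~\ref{lem:formal-uniqueness} then identifies $\Tcan(f+h)$ and $\Tcan(f')$ directly. Likewise, you get inverses from multiplicativity and $\Tcan(1)=1$, not from $T(g^{-1})=(Tg)^{-1}$.

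What your route buys:
\begin{itemize}
\item It removes the case split and the second appeal to preparation.
\item It makes well-definedness of $\Tcan$ independent of Lemma~\ref{lem:unique-preparation}, since $\Tcan(f)$ is characterized intrinsically as the unique expansion of $f$.
\item If you also use the asymptotic product rule you sketch, then apart from Proposition~5.8 of~\cite{KRS} for derivatives, everything reduces to two facts: elements of $\mathcal Q$ have expansions, and $\sim$ is closed under sums and products.
\end{itemize}

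What the paper's route buys: it works entirely with identities inside $\mathcal Q$. It invokes the asymptotic property of $T$ only for units. It also produces the actual preparations of $f+h$ and $f'$ along the way.
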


\begin{proof}
The definition is independent of choices by
Lemma~\ref{lem:unique-preparation}.  The asymptotic properties of $Tg$ in the
$\mathcal Q$-class show that $X^rTg$ is a natural-support generalized Laurent
asymptotic expansion of $x^rg=f$.  Its uniqueness follows from
Lemma~\ref{lem:formal-uniqueness}.

Let $f=x^rg$ and $h=x^sk$ be nonzero prepared germs.  Since $gk$ is a
$\mathcal Q$-unit and $T$ is an algebra homomorphism,
\[
  \Tcan(fh)=X^{r+s}T(gk)
            =(X^rTg)(X^sTk)=\Tcan(f)\Tcan(h).
\]
Likewise, $g^{-1}\in\mathcal Q$ and
$T(g^{-1})=(Tg)^{-1}$, so
$\Tcan(f^{-1})=\Tcan(f)^{-1}$.

For addition, assume $r\leq s$.  If $r<s$, then
$
  f+h=x^r\bigl(g+x^{s-r}k\bigr),
$
and the factor in parentheses is a $\mathcal Q$-unit.  Therefore
\[
  \Tcan(f+h)=X^r\bigl(Tg+X^{s-r}Tk\bigr)
             =\Tcan(f)+\Tcan(h).
\]
If $r=s$, then $f+h=x^r(g+k)$.  If $g+k=0$, the assertion is immediate.
Otherwise apply the unary preparation theorem to the $\RQ$-definable germ
$g+k$, say $g+k=x^tu$ with $u\in\mathcal Q$ a unit.  Since $g+k$ is bounded
at zero, $t\geq0$, and the monomial $x^t$ belongs to $\mathcal Q$.  The
injective algebra homomorphism $T$ gives
\[
  Tg+Tk=T(g+k)=X^tTu.
\]
It follows that
\[
  \Tcan(f+h)=X^{r+t}Tu=X^r(Tg+Tk)
             =\Tcan(f)+\Tcan(h).
\]
Thus $\Tcan$ is a field homomorphism.

If $f\neq0$, then the leading term of $\Tcan(f)=X^rTg$ is
$g(0)X^r$, which is nonzero; hence $\Tcan$ is injective.  Also,
$f(x)=x^r(g(0)+o(1))$, so the eventual sign of $f$ is the sign of the
leading coefficient $g(0)$.  Thus $\Tcan$ preserves the order.

It remains to verify compatibility with ordinary differentiation.  Let
$f=x^rg$ and put $\partial g=xg'$.  Then
$
  f'=x^{r-1}h$
  where $h:=rg+\partial g\in\mathcal Q,
$
and Proposition~5.8 of~\cite{KRS} gives
$
  Th=rTg+\partial_X(Tg).
$
If $h=0$, both $\Tcan(f')$ and $(\Tcan f)'$ vanish.  If $h\neq0$, apply the
unary preparation theorem to $h$: since $h$ is bounded at zero, there are
$t\geq0$ and a $\mathcal Q$-unit $u$ such that $h=x^tu$.  Hence
\[
  \Tcan(f')=X^{r-1+t}Tu
  =X^{r-1}Th
  =X^{r-1}\bigl(rTg+\partial_X(Tg)\bigr)
  =(X^rTg)'.
\]
Therefore $\Tcan(f')=(\Tcan f)'$, completing the proof.
\end{proof}

\begin{corollary}\label{cor:bounded-formal}
Every bounded unary $\RQ$-definable germ has a unique formal
natural-support generalized power-series expansion with nonnegative
exponents.
\end{corollary}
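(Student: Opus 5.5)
The plan is to combine the unary preparation theorem with Theorem~\ref{thm:canonical-map}. Let $f$ be a bounded unary $\RQ$-definable germ. If $f=0$, then $\Tcan(f)=0$ is trivially a power series. Otherwise, write the unique preparation $f=x^rg$ from \eqref{eq:KRS-preparation}, with $g\in\mathcal Q$ and $g(0)\neq0$. The first step is to show that $r\geq0$. Since $g(x)\to g(0)\neq0$, we have $|f(x)|\geq \tfrac12|g(0)|x^r$ for small $x>0$. If $r<0$, this lower bound tends to $+\infty$, which contradicts boundedness. Hence $r\geq0$.

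Next, by definition $\Tcan(f)=X^rTg$, and $Tg\in\Pnat$ has support in $[0,\infty)$. Shifting by $r\geq0$ therefore keeps the support inside $[0,\infty)$. The support is also natural, since translation preserves naturalness. Thus $\Tcan(f)\in\Pnat$. By Theorem~\ref{thm:canonical-map}, $\Tcan(f)$ is an asymptotic expansion of $f$ in the sense of the natural-support definition, which gives existence.

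For uniqueness, suppose $F\in\Pnat$ is any other formal natural-support generalized power-series expansion of $f$. Then $F$ is in particular an element of $\Gnat$ satisfying $f\sim F$. Lemma~\ref{lem:formal-uniqueness} then forces $F=\Tcan(f)$.

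The only nontrivial point is the sign of the prepared exponent. This uses just that $g(0)\neq0$ and the continuity of $g$ at $0$, which holds because $g$ has asymptotic expansion $Tg$ with constant term $g(0)$. I therefore expect no real obstacle.
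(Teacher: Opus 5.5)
Your proposal is correct and follows essentially the same route as the paper. The paper's proof also uses boundedness to force $r\geq0$ in the preparation $f=x^rg$, so that $\Tcan(f)=X^rTg$ lies in $\Pnat$, with existence and uniqueness coming from Theorem~\ref{thm:canonical-map} and Lemma~\ref{lem:formal-uniqueness}; you have simply written out these steps in more detail.
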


\begin{proof}
For a nonzero bounded germ $f=x^rg$ with $g(0)\neq0$, boundedness forces
$r\geq0$.  Hence $\Tcan(f)=X^rTg$ belongs to $\Pnat$.
\end{proof}

\begin{remark}
\label{rem:Tcan-on-Q}
If $f\in\mathcal Q$, then $\Tcan(f)=Tf$.  Indeed, both series are natural
asymptotic expansions of the same germ, so this follows from
Lemma~\ref{lem:formal-uniqueness}.
\end{remark}



\section{Convergence criterion and rigidity}
\label{sec:criterion}

We first identify convergent natural-support series inside the
one-variable $\mathcal Q$-class.

\begin{lemma}\label{lem:convergent-in-Q}
Let
$
  F(X)=\sum_{\alpha\in A}c_\alpha X^\alpha\in\Pnat
$
converge at some positive radius.  On each sufficiently small standard
quadratic domain, its holomorphic sum belongs to the one-variable class
$\mathcal Q$, and its $\mathcal Q$-expansion is $F$.
\end{lemma}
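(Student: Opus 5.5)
The plan is to realize the sum of $F$ as a germ in $\mathcal Q$ by using the structure of the KRS class: $\mathcal Q$ is generated (on standard quadratic domains) from convergent generalized power series together with the quasianalytic Dulac-type germs, and in particular contains every germ given by a convergent series $\sum c_\alpha z^\alpha$ with natural support, interpreted on the Riemann surface of the logarithm via $z^\alpha=\exp(\alpha\log z)$. Concretely, I will first fix $\rho>0$ with $\norm{F}_\rho<\infty$ and show that the series $F(z)=\sum_{\alpha\in A} c_\alpha z^\alpha$ converges absolutely and locally uniformly on the sector region of $\Lsurf$ where $|z|<\rho$ and $|\arg z|$ is bounded on compacta. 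The key estimate is $|z^\alpha|=|z|^\alpha e^{-\alpha\arg z\cdot 0}$; more precisely $|z^\alpha|=|z|^\alpha$ for real $\alpha$, so absolute convergence holds on the whole part $\{|z|<\rho\}$ of $\Lsurf$, uniformly. Hence $F$ defines a holomorphic function on $\{z\in\Lsurf:|z|<\rho\}$, which contains every sufficiently small standard quadratic domain (those are contained in $\{|z|<\rho\}$ once the defining parameter is chosen large, i.e.\ the domain is small).

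Next, I will verify that this holomorphic function has $F$ as its asymptotic expansion in the KRS sense on the quadratic domain: for each $B$, $|F(z)-\sum_{\alpha\le B}c_\alpha z^\alpha|\le \rho^{-\alpha_B}\norm{F}_\rho|z|^{\alpha_B}$ where $\alpha_B$ is the next exponent above $B$, exactly as in the proof of Lemma~\ref{lem:leading}, now uniformly on the domain since $|z^\alpha|=|z|^\alpha$. This gives the uniform asymptotic expansion with natural support required in the definition of the one-variable $\mathcal Q$-class. Real coefficients make the sum real on the positive axis.

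The main obstacle is membership in $\mathcal Q$ itself, since $\mathcal Q$ is not defined as all functions admitting such expansions but is a specific class (built from Dulac-type transition maps and closed under certain operations). My first attempt will be to appeal to the KRS construction: their algebras are defined to contain all germs holomorphic on a standard quadratic domain admitting a uniform natural-support asymptotic expansion (the ``$\mathcal Q$-summable'' germs in one variable), in which case the previous paragraph suffices. If instead the class is defined by generation, I will write $F$ as a convergent ordinary power series composed with finitely-or-countably many monomials and use closure of $\mathcal Q$ under convergent power-series substitution (compare Lemma~\ref{lem:functional-calculus}) together with the fact recalled above that each $x^\alpha$ lies in $\mathcal Q$. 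Finally, once $F\in\mathcal Q$, its $\mathcal Q$-expansion $T$ of the sum is a natural asymptotic expansion of the same germ, so by Lemma~\ref{lem:formal-uniqueness} (as in Remark~\ref{rem:Tcan-on-Q}) it equals $F$.
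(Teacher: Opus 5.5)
Your argument is essentially the paper's: you sum the series on the logarithmic disk $\{|z|<\rho\}$, prove the uniform tail bound $(|z|/\rho)^{\beta}\norm{F}_\rho$ on a standard quadratic domain $W_{c,C}$ with $c<\rho$, and obtain membership from your ``first attempt.'' That step is exactly the one-variable identity $\mathcal A_1^1(W_{c,C})=\mathcal Q_1^1(W_{c,C})$ of~\cite[Remark~5.2(2),(3)]{KRS}, which you should cite as a fact rather than hedge. You should drop your opening description of $\mathcal Q$ as generated by convergent series and Dulac germs, and the fallback built on it: an infinite natural support need not lie in a finitely generated monoid (for example $\{\sqrt p: p \text{ prime}\}$), so $F$ need not be a convergent power series in finitely many monomials, and no closure property of $\mathcal Q$ covers substitution into countably many monomials.
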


\begin{proof}
Suppose $\norm{F}_\rho<\infty$.  On the Riemann surface of the logarithm put
\[
  f(z):=\sum_{\alpha\in A}c_\alpha z^\alpha,
  \qquad z^\alpha:=\exp(\alpha\log z).
\]
The series converges normally on compact subsets of the logarithmic disk
$0<|z|<\rho$, since $|z^\alpha|=|z|^\alpha$.  It therefore defines a
holomorphic function there.  Fix a standard quadratic domain
$W_{c,C}$ with $0<c<\rho$.

Let $B\geq0$.  Naturalness makes $A\cap[0,B]$ finite.  If the remaining tail
is nonempty, let $\beta$ be its least exponent; then $\beta>B$, and for
$z\in W_{c,C}$ we have
\[
 \left|f(z)-\sum_{\alpha\leq B}c_\alpha z^\alpha\right|
 \leq \left(\frac{|z|}{\rho}\right)^\beta\norm{F}_\rho
 =O(|z|^\beta)=o(|z|^B).
\]
The estimate is uniform on every smaller standard quadratic domain.  If the
tail is empty, the assertion is immediate.  Thus $f$ belongs to the
one-variable asymptotic class $\mathcal A_1^1(W_{c,C})$ of~\cite{KRS} with
expansion $F$.  By~\cite[Remark~5.2(2),(3)]{KRS}, restriction to smaller
quadratic domains is harmless and
$\mathcal A_1^1(W_{c,C})=\mathcal Q_1^1(W_{c,C})$.  Hence $f\in\mathcal Q$
and its expansion is $F$.
\end{proof}

\begin{theorem}\label{thm:convergence-criterion}
Let $f\in\Hfield$.  The following are equivalent.
\begin{enumerate}
\item The germ $f$ is represented near $0^+$ by a convergent generalized
Laurent series with well-ordered support.
\item The canonical series $\Tcan(f)$ converges at some positive radius.
\end{enumerate}
Whenever these conditions hold, every convergent generalized Laurent-series
representation of $f$ is equal to $\Tcan(f)$.
\end{theorem}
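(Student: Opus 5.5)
The plan is to obtain both directions, together with the uniqueness clause, from Proposition~\ref{prop:rigidity} combined with Theorem~\ref{thm:canonical-map}. By Theorem~\ref{thm:canonical-map}, every $f\in\Hfield$ has the natural-support asymptotic expansion $\widehat f:=\Tcan(f)$. So Proposition~\ref{prop:rigidity} applies verbatim to $h=f$.

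\emph{(1)$\Rightarrow$(2) and uniqueness.} Suppose $f$ is represented near $0^+$ by a convergent series $F$ with well-ordered support. Proposition~\ref{prop:rigidity} with $\widehat h=\Tcan(f)$ gives $F=\Tcan(f)$ as formal series. In particular $\Tcan(f)$ converges, at any radius where $F$ does. The same argument shows that every such convergent representation equals $\Tcan(f)$, which is the final assertion. For $f=0$ the argument is trivial: a convergent series representing $0$ must be $0$ by Lemma~\ref{lem:leading}, since a nonzero series has a nonzero leading term.

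\emph{(2)$\Rightarrow$(1).} Assume $f\neq0$ and $\norm{\Tcan(f)}_\rho<\infty$. Write the preparation $f=x^rg$ with $g\in\mathcal Q$ and $g(0)\neq0$, so that $\Tcan(f)=X^rTg$. Then $\norm{Tg}_\rho=\rho^{-r}\norm{\Tcan(f)}_\rho<\infty$, and $Tg\in\Pnat$ converges. By Lemma~\ref{lem:convergent-in-Q}, the sum $G(x)=\sum c_\alpha x^\alpha$ of $Tg$ is a germ in $\mathcal Q$ whose expansion is $Tg$. Injectivity of $T$ on $\mathcal Q$ then gives $G=g$ as germs. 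Hence $f(x)=x^rG(x)=\sum c_\alpha x^{\alpha+r}$ for small $x>0$, which is the convergent series $\Tcan(f)$ with natural, hence well-ordered, support.

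I expect the main point requiring care to be the identification $G=g$. This needs both $G$ and $g$ to be viewed as elements of the same one-variable class $\mathcal Q$, possibly after shrinking the quadratic domain, so that injectivity of $T$ from~\cite{KRS} applies. Lemma~\ref{lem:convergent-in-Q} supplies exactly this, since restriction to smaller standard quadratic domains is harmless. Everything else is bookkeeping.
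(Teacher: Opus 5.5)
Your proposal is correct and follows the paper's proof essentially verbatim. Proposition~\ref{prop:rigidity}, applied with the expansion from Theorem~\ref{thm:canonical-map}, gives (1)$\Rightarrow$(2) and uniqueness; for (2)$\Rightarrow$(1) you pass to the preparation $f=x^rg$, use $\norm{Tg}_\rho=\rho^{-r}\norm{\Tcan(f)}_\rho$, and identify the sum with $g$ via Lemma~\ref{lem:convergent-in-Q} and injectivity of $T$ (your separate treatment of the zero germ through Lemma~\ref{lem:leading} is a harmless addition, since that case is also covered by Proposition~\ref{prop:rigidity} with $\widehat h=0$).
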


\begin{proof}
If $f$ has a convergent representation, Proposition~\ref{prop:rigidity}
applied to the canonical natural asymptotic expansion
$\Tcan(f)$ shows that the representing series equals $\Tcan(f)$.

Conversely, suppose $f\neq0$, write $f=x^rg$ as in
\eqref{eq:KRS-preparation}, and assume
$\Tcan(f)=X^rTg$ converges at a radius $\rho>0$.  Then
\[
  \norm{Tg}_\rho=\rho^{-r}\norm{\Tcan(f)}_\rho<\infty,
\]
so the power series $Tg$ converges.  Let $G$ be its holomorphic sum.  By Lemma~\ref{lem:convergent-in-Q},
$G\in\mathcal Q$ and $TG=Tg$.  Injectivity of the $\mathcal Q$-expansion map
gives $G=g$.  Hence
$
  f(x)=x^rG(x)
$ 
is a convergent generalized Laurent-series representation.  The zero germ
is immediate, and uniqueness was proved in the first paragraph.
\end{proof}

\begin{corollary}
\label{cor:bounded-criterion}
For a bounded unary $\RQ$-definable germ, the following are equivalent:
being represented by a convergent generalized power series and convergence
of its canonical series.  No assumption of naturalness is needed for the
support of a competing convergent representation.
\end{corollary}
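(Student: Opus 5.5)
This is a specialization of Theorem~\ref{thm:convergence-criterion} to bounded germs, so the plan is to reduce to that theorem and use Corollary~\ref{cor:bounded-formal} to control the sign of the exponents. Let $f$ be a bounded unary $\RQ$-definable germ; the zero germ is trivial, so assume $f\neq0$ and write $f=x^rg$ as in \eqref{eq:KRS-preparation}. Boundedness forces $r\geq0$, since $|f(x)|\sim|g(0)|x^r$. Hence $\Tcan(f)=X^rTg\in\Pnat$ by Corollary~\ref{cor:bounded-formal}.

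For the direction from a convergent representation to convergence of the canonical series, suppose $f$ is represented near $0^+$ by a convergent generalized power series $F$ whose support is an arbitrary well-ordered subset of $[0,\infty)$. Such an $F$ is in particular a convergent generalized Laurent series with well-ordered support. Proposition~\ref{prop:rigidity}, applied with $\widehat h=\Tcan(f)$, then gives $F=\Tcan(f)$, and so $\Tcan(f)$ converges. This is where the phrase ``no assumption of naturalness'' is justified: the proof of Proposition~\ref{prop:rigidity} uses only that the union of a well-ordered set with a natural set is well ordered, together with Lemma~\ref{lem:leading}.

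For the converse, suppose $\Tcan(f)$ converges at some radius $\rho>0$. By Theorem~\ref{thm:convergence-criterion}, $f(x)=x^rG(x)$, where $G$ is the convergent sum of $Tg$. The representing series is $X^rTg=\Tcan(f)$, and its support lies in $r+[0,\infty)\subseteq[0,\infty)$ because $r\geq0$. So it is a convergent generalized power series, not merely a Laurent series.

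No step here is a real obstacle. The only point needing care is to check that the competing representation in the first direction genuinely falls under the hypotheses of Proposition~\ref{prop:rigidity}, namely absolute convergence at a positive radius and well-ordered support. I would also remark that the final uniqueness clause of Theorem~\ref{thm:convergence-criterion} transfers verbatim to this bounded setting.
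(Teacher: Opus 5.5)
Your proof is correct and matches the paper's intended argument. The paper states this corollary without a separate proof, as the specialization of Theorem~\ref{thm:convergence-criterion} (whose rigidity direction, via Proposition~\ref{prop:rigidity}, already allows arbitrary well-ordered supports) combined with Corollary~\ref{cor:bounded-formal} (boundedness gives $r\geq0$, so the representation $X^rTg$ is a genuine generalized power series), and your two directions and your treatment of the naturalness clause follow exactly that.
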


\begin{proposition}\label{prop:convergent-locus}
The set
\[
  \Hconv=\{f\in\Hfield:\Tcan(f)\text{ converges at some positive radius}\}
\]
is an ordered differential subfield of $\Hfield$.  It consists exactly of
the germs represented by convergent generalized Laurent series.
\end{proposition}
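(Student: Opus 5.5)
The plan is to transport everything through the injective ordered differential-field homomorphism $\Tcan$ of Theorem~\ref{thm:canonical-map}. This reduces the problem to showing that the set $\mathcal C\subseteq\Gnat$ of series converging at some positive radius is an ordered differential subfield of $\Gnat$. Indeed, $\Hconv=\Tcan^{-1}(\mathcal C)$, and the preimage of a differential subfield under a differential-field homomorphism is a differential subfield of $\Hfield$. The ordering is simply the restriction of the ordering of $\Hfield$, so nothing further is needed for the word ``ordered''. The final sentence of the statement, that $\Hconv$ consists exactly of the germs with convergent generalized Laurent representations, is the content of Theorem~\ref{thm:convergence-criterion}, and I would cite it directly.

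For $\mathcal C$ I would check the field operations with the norms $\norm{\cdot}_\rho$.
\begin{itemize}
\item Constants lie in $\mathcal C$.
\item For sums, $\norm{F+G}_\rho\le\norm{F}_\rho+\norm{G}_\rho$, and norms at a smaller radius stay finite after factoring out the lowest monomial. Concretely, $\norm{X^\lambda P}_r=r^\lambda\norm{P}_r$, and for $P$ with nonnegative support $\norm{P}_r\le\norm{P}_\rho$ when $r\le\rho$. So I can pass to a common radius.
\item For products, $\norm{FG}_\rho\le\norm{F}_\rho\norm{G}_\rho$ by expanding the Cauchy product with absolute values.
\item For inverses, write $F=aX^\lambda(1+H)$ as in Proposition~\ref{prop:natural-field}, with $H$ convergent and $\val(H)>0$. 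Then Lemma~\ref{lem:functional-calculus}, applied to $A(T)=(1+T)^{-1}$, shows that $(1+H)^{-1}$ converges, and multiplying by $a^{-1}X^{-\lambda}$ preserves convergence.
\end{itemize}

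The step I expect to be the main obstacle is closure under the derivative $F'=\sum\alpha c_\alpha X^{\alpha-1}$. The factor $\alpha$ grows, so $\norm{F'}_\rho$ may be infinite. I would shrink the radius to get around this. Since $\alpha\,(r/\rho)^\alpha$ is bounded for $\alpha$ bounded below when $r<\rho$, we get
\[
\sum_\alpha|\alpha c_\alpha|r^{\alpha}\le\Bigl(\sup_{\alpha\ge\alpha_0}|\alpha|(r/\rho)^{\alpha}\Bigr)\sum_\alpha|c_\alpha|\rho^\alpha<\infty,
\]
where $\alpha_0=\val(F)$; the supremum is finite because $\alpha\mapsto|\alpha|(r/\rho)^\alpha$ is continuous and tends to $0$ as $\alpha\to\infty$. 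Multiplying by $X^{-1}$ then gives $\norm{F'}_r<\infty$. Alternatively, I could argue on the germ side: a convergent Laurent representation can be differentiated termwise on $(0,r)$ by this uniform domination, and the result can be fed back into Theorem~\ref{thm:convergence-criterion}. Either way, $\mathcal C$ is a differential subfield, and pulling back through $\Tcan$ completes the argument.
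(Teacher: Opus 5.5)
Your proposal is correct and matches the paper's proof step for step. You cite Theorem~\ref{thm:convergence-criterion} for the last assertion, and you check in coefficient norm that convergent series are closed under sums, products, inverses (via $F=aX^\lambda(1+H)$ and a geometric series at a small radius) and derivatives (via boundedness of $|\alpha|(r/\rho)^\alpha$ after shrinking the radius), then pull back through the differential-field homomorphism $\Tcan$. The only cosmetic difference is that you invoke Lemma~\ref{lem:functional-calculus} with $A(T)=(1+T)^{-1}$ for the inverse, where the paper estimates the geometric series directly.
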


\begin{proof}
The last assertion is Theorem~\ref{thm:convergence-criterion}.  Let $F$ and
$G$ be convergent natural-support generalized Laurent series.  After passing
to a common smaller radius, $F+G$ and $FG$ converge by the triangle
inequality and the submultiplicativity of the coefficient norm.

If $F\neq0$, write
\[
  F=aX^\lambda(1+H),
  \qquad a\neq0,
\]
where $H=0$ or $\val(H)>0$.  If $H\neq0$, choose $R>0$ with
$\norm{H}_R<\infty$ and put $\delta=\val(H)$.  Then, for $0<r<R$,
$\norm{H}_r\leq(r/R)^\delta\norm{H}_R$.  Thus
$\norm{H}_r<1$ for all sufficiently small $r$, and the geometric series
for $(1+H)^{-1}$ converges in coefficient norm.  Hence $F^{-1}$ is
convergent.

Finally, suppose $\norm{F}_R<\infty$ and fix $0<r<R$.  Put $q=r/R$.  The
support of $F$ contains only finitely many negative exponents, while
$\sup_{\alpha\geq0}\alpha q^\alpha<\infty$.  Consequently,
\[
  \norm{F'}_r
  =r^{-1}\sum_\alpha |c_\alpha|R^\alpha |\alpha|q^\alpha
  <\infty.
\]
The differential-field property of $\Tcan$ now shows that $\Hconv$ is closed
under differentiation.  The ordering is inherited from $\Hfield$.
\end{proof}

This proves all assertions of Theorem~\ref{thm:A} except the properness of
$\Hconv$, which will follow from the divergent transition germ in
Corollary~\ref{cor:proper-convergent-locus}.

%


\section{\texorpdfstring{Principal branches and branching of $\RQ$}{Principal branches and branching of R-Q}}
\label{sec:branching}

Let
$
  \Lsurf=\{(r,\theta):r>0,\ \theta\in\R\}
$
be the Riemann surface of the logarithm, with projection
$(r,\theta)\mapsto re^{i\theta}$.  A standard quadratic domain is a set
\[
  W_{c,C}=\bigl\{(r,\theta)\in\Lsurf:
       0<r<c\exp(-C\sqrt{|\theta|})\bigr\},
  \qquad c,C>0.
\]
Every quadratic domain contains a standard one.  After decreasing
$\delta>0$, the standard domain $W_{c,C}$ contains
\begin{equation}
\label{eq:angular-strip}
  \{(r,\theta):0<r<\delta,\ |\theta|\leq\pi\},
\end{equation}
since one may take $\delta<c\exp(-C\sqrt\pi)$.

The next lemma is the only technical point in the branching argument.  Its
proof is local in the angular variable.  Kaiser's regular blow-up turns a
small angular sector into an ordinary analytic variable; finitely many such
sectors cover the principal branch.  Reflection across the positive axis
then gives the lower half-disk.

\begin{lemma}\label{lem:principal}
Let $U\subseteq\Lsurf$ be a quadratic domain and let
$\Phi\in\mathcal Q(U)$.  Assume that $\Phi$ is real valued on the positive
real axis.  Then, for some $\delta>0$, the restriction of $\Phi$ through
the principal branch of the logarithm to
\[
  \Delta_\delta=D(0,\delta)\setminus(-\delta,0]
\]
is $\RQ$-definable as a complex-valued function of two real variables.
\end{lemma}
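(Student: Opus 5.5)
We follow the hint in the paragraph preceding the statement: reduce to compact angular sectors, use Kaiser's regular blow-up on each sector, and then patch the pieces and reflect. The first step shrinks $U$ to a standard quadratic domain $W_{c,C}\subseteq U$ and picks $\delta_0<c\exp(-C\sqrt\pi)$. By \eqref{eq:angular-strip}, $\Phi$ is then holomorphic on the closed angular strip $\{(r,\theta):0<r<\delta_0,\ |\theta|\le\pi\}$. The principal branch of the logarithm identifies $\Delta_{\delta_0}$ with the part of this strip where $|\theta|<\pi$. It therefore suffices to show that $(r,\theta)\mapsto\Phi(r,\theta)$, for $0<r<\delta$ and $|\theta|<\pi$, is $\RQ$-definable. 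The change of coordinates $x+iy\mapsto (r,\theta)$ on $\Delta_\delta$ is semialgebraic up to $\arg$. On the slit plane $\theta$ is $\RQ$-definable, because it is given piecewise by $\arctan$ of a ratio, and $\arctan$ restricted to a compact interval is analytic, hence in $\RQ$. Composition with this chart therefore preserves definability.

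Next, cover $[0,\pi]$ by finitely many compact intervals $[\theta_j-\eta,\theta_j+\eta]$. On each one we want a local chart: the regular blow-up of Kaiser (as used in \cite{KRS}) sends a point $(r,\theta)$ near direction $\theta_j$ to the pair $(r,\theta-\theta_j)$, or equivalently $z=e^{i\theta_j}w$ with $w$ near the positive axis. Under this chart $\Phi(e^{i\theta_j}w)$ becomes a function of the ordinary real variables $(r,t)$, with $t=\theta-\theta_j$ ranging over a small compact interval. The key claim is that the real and imaginary parts of $(r,t)\mapsto\Phi(re^{i(\theta_j+t)})$ are $\RQ$-definable on $(0,\delta)\times[-\eta,\eta]$. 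To prove it, expand in $t$: $\Phi(re^{i(\theta_j+t)})=\sum_n \frac{(it)^n}{n!}(\partial^n\Phi)(re^{i\theta_j})$. Here $\partial=z\,d/dz$ preserves $\mathcal Q$ by \cite[Proposition~5.8]{KRS}. The rotated germ $w\mapsto\Phi(e^{i\theta_j}w)$ again lies in the $\mathcal Q$-class on a quadratic domain, since rotation by a bounded angle maps a quadratic domain into one. This exhibits the function as a two-variable germ of the class $\mathcal Q_2$ in the variables $(r,t)$, with $t$ entering analytically. The generating family of $\RQ$ is closed under this kind of blow-up substitution, which is exactly the content of Kaiser's regular blow-up lemma. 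Consequently its restriction to a compact box is a basic function of $\RQ$.

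\textbf{The main obstacle} is this step. One must check that the sector chart really produces an element of the two-variable quasianalytic class, with uniform asymptotics in $t$, rather than merely an analytic function in $t$ for each fixed $r$. I would first try to derive the uniformity from the normal convergence of the Taylor series in $t$ together with the uniform $\mathcal Q$-bounds on $\partial^n\Phi$ over the slightly larger quadratic domain, using Cauchy estimates on the $\theta$-strip. Failing that, I would quote the blow-up closure property of the $\mathcal Q$-classes directly from \cite{KRS}.

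Finally, there are finitely many sectors, so the union of these definable pieces over the upper half $0\le\theta<\pi$ is definable. Because $\Phi$ is real on the positive axis, the reflection principle gives $\Phi(\bar z)=\overline{\Phi(z)}$ on the principal sheet. The lower half $-\pi<\theta\le 0$ is then obtained by composing with complex conjugation, which is semialgebraic. Taking $\delta\le\delta_0$ small enough for all the charts yields the stated $\RQ$-definability on $\Delta_\delta$.
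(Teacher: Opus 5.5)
Your outline has the same architecture as the paper's proof: a standard quadratic domain containing the closed strip $|\theta|\le\pi$, finitely many angular sectors handled by Kaiser's regular blow-up, then reflection. But the step you flag as the main obstacle is a genuine gap, and neither of your two routes closes it as stated.

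The Taylor expansion in $t$ with Cauchy estimates would at best show that $(z_1,t)\mapsto\Phi(z_1e^{i(\theta_j+t)})$ is bounded, holomorphic, and has an asymptotic expansion in $z_1$ uniformly in $t$. That places it in an asymptotic class. It does not place it in the quasianalytic mixed class of \cite{KRS}, whose real-on-real members are the ones whose restrictions to small boxes \cite{KRS} shows to be $\RQ$-definable. The identification of the asymptotic class with $\mathcal Q$ that the paper invokes, \cite[Remark~5.2(3)]{KRS}, is a one-variable statement. Likewise, summing $\sum_n (it)^n\partial^n\Phi/n!$ over one-variable $\mathcal Q$-germs does not by itself produce a two-variable $\mathcal Q$-germ.

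Your fallback citation points at the right tool but misdescribes it. The regular blow-up is not the rotation $z=e^{i\theta_j}w$, which only yields another one-variable germ on a quadratic domain. It acts on a function of two variables by $z_2\mapsto z_1(\lambda+z_2)$ and makes the new second variable a regular analytic one.

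Concretely, the paper proceeds in four steps.
\begin{enumerate}
\item It introduces a dummy variable, $f(z_1,z_2)=\Phi(z_2)$, which lies in the two-variable class by \cite[Remark~5.2(4) and Lemma~5.5(2)]{KRS}.
\item It applies $\mathbf r^{1,\lambda_a}$ with $\lambda_a=e^{ia}$ (\cite[Remark~3.2]{Kaiser}, based on \cite[Propositions~4.4 and~5.15]{KRS}) to get $\Phi(z_1(\lambda_a+z_2))$ in the mixed class.
\item It composes in the regular variable with $h_a(z)=e^{i(z+a)}-e^{ia}$, as allowed by \cite[Proposition~5.10]{KRS}. This gives $G_a(z_1,z_2)=\Phi(z_1e^{i(a+z_2)})$, which equals $\Phi(re^{i(a+\varphi)})$ at real $(r,\varphi)$.
\item Since $G_a$ is complex valued at real points, its restriction is not itself a function the KRS machinery makes definable; you skip this point. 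With $z_1^*=(r,-\theta)$ for $z_1=(r,\theta)$, one needs $G_a^*(z_1,z_2)=\overline{G_a(z_1^*,\overline{z_2})}$ to remain in the class, which is \cite[Proposition~7.3]{KRS}. Then $(G_a+G_a^*)/2$ and $(G_a-G_a^*)/(2i)$ are real on real points and restrict to the real and imaginary parts you want.
\end{enumerate}
Also, the widths $\varepsilon_a$ come out of this construction. So the finite subcover of $[0,\pi]$ should be extracted after they are known, not with a fixed $\eta$ chosen in advance. Your handling of the argument map and of the reflection to the lower half-disk is fine.
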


\begin{proof}
By~\cite[Remark~5.2(2)]{KRS}, we may restrict $\Phi$ to a symmetric
standard quadratic domain $W_{c,C}\subseteq U$.  After decreasing a radial
bound, $W_{c,C}$ contains all $(r,\theta)$ with
$0<r<\delta_0$ and $|\theta|\leq\pi$.

We first treat the upper half-disk.  We follow Case~3 in the proof of
\cite[Theorem~3.3]{Kaiser}.  Introduce a dummy variable by
$
  f(z_1,z_2)=\Phi(z_2).
$
The dummy-variable and permutation properties
\cite[Remark~5.2(4) and Lemma~5.5(2)]{KRS} put $f$ in the two-variable
$\mathcal Q$-class used in Kaiser's argument.  For $a\in[0,\pi]$, put
$\lambda_a=e^{ia}$.  The regular blow-up construction
\cite[Remark~3.2]{Kaiser}, based on
\cite[Propositions~4.4 and~5.15]{KRS}, gives
$
  g_a:=\mathbf r^{1,\lambda_a}f
$
in the corresponding mixed class.  Define
\[
  h_a(z)=e^{i(z+a)}-e^{ia},
  \qquad
  G_a(z_1,z_2)=g_a(z_1,h_a(z_2)).
\]
Analytic composition preserves the mixed class by
\cite[Proposition~5.10]{KRS}.

After shrinking to conjugation-invariant domains, write
$z_1^*=(r,-\theta)$ for $z_1=(r,\theta)$ and set
\[
  G_a^*(z_1,z_2)=\overline{G_a(z_1^*,\overline{z_2})}.
\]
The conjugation argument in Case~3 of~\cite[Theorem~3.3]{Kaiser}, using
\cite[Proposition~7.3]{KRS}, keeps $G_a^*$ in the same mixed class.  Put
\[
  RG_a=\frac{G_a+G_a^*}{2},
  \qquad
  JG_a=\frac{G_a-G_a^*}{2i}.
\]
The mixed-class construction in
\cite[Definition~7.1, Proposition~7.3, and the discussion before
Theorem~7.9]{KRS} yields $\varepsilon_a>0$ such that the restrictions of
$RG_a$ and $JG_a$ to
$
  [0,\varepsilon_a]\times[-\varepsilon_a,\varepsilon_a]
$
are $\RQ$-definable.  On real arguments they are
\[
  (r,\varphi)\longmapsto
  \operatorname{Re}\Phi(re^{i(a+\varphi)}),
  \qquad
  (r,\varphi)\longmapsto
  \operatorname{Im}\Phi(re^{i(a+\varphi)}).
\]
This is the computation in Kaiser's Case~3.  Finitely many intervals
$(a-\varepsilon_a,a+\varepsilon_a)$ cover $[0,\pi]$.  Taking a common
smaller radial bound and using the definable polar-coordinate map gives
definability of $\Phi$ on a sufficiently small upper half-disk.

For the lower half-disk, define
$
  \Phi^*(r,\theta):=\overline{\Phi(r,-\theta)}.
$
The functions $\Phi$ and $\Phi^*$ are holomorphic on the connected domain
$W_{c,C}$ and agree on a positive real interval.  The identity theorem
therefore gives
\[
  \Phi(r,-\theta)=\overline{\Phi(r,\theta)}.
\]
Complex conjugation is semialgebraic, so reflection of the definable
upper-half-disk restriction gives a definable lower-half-disk restriction.
The two restrictions agree on the positive axis and define $\Phi$ on a
smaller principal slit disk.
\end{proof}

\begin{theorem}
\label{thm:RQ-branching}
The o-minimal structure $\RQ$ is branching.
\end{theorem}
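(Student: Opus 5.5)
To prove Theorem~\ref{thm:RQ-branching}, we show that every $\RQ$-definable germ $f:(0,\varepsilon)\to\R$ extends, after shrinking, to an $\RQ$-definable holomorphic function on some slit disk $\Delta_\delta$. The zero germ is trivial. For $f\neq0$ we use the unary preparation \eqref{eq:KRS-preparation}, $f(x)=x^rg(x)$ with $r\in\R$, $g\in\mathcal Q$ and $g(0)\neq0$. The proof treats the two factors separately and then multiplies the extensions.

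First consider the factor $g$. Fix a holomorphic representative $\Phi$ of $g$ on a quadratic domain $U\subseteq\Lsurf$. By Proposition~7.3 of~\cite{KRS} (equivalently, because $g$ is real on $(0,\varepsilon)$), $\Phi$ is real valued on the positive real axis. Lemma~\ref{lem:principal} then gives $\delta_1>0$ such that $\Phi$, read through the principal branch of the logarithm, is $\RQ$-definable on $\Delta_{\delta_1}$ as a complex-valued function of two real variables. This restriction is holomorphic, since the principal branch identifies $\Delta_{\delta_1}$ biholomorphically with the sheet $\{(\rho,\theta):0<\rho<\delta_1,\ |\theta|<\pi\}\subseteq\Lsurf$. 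On $(0,\delta_1)$ it agrees with $g$.

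Next consider the monomial. The principal power $z^r=\exp(r\operatorname{Log}z)$ is holomorphic on $\Delta_\delta$. In polar coordinates it equals $\rho^r(\cos r\theta+i\sin r\theta)$ with $|\theta|<\pi$. We must check that this is $\RQ$-definable. The factor $\rho^r$ is a real power, which is definable because $\RQ$ defines $x\mapsto x^r$ on $(0,\infty)$: the monomials $x^r$ lie in $\mathcal Q$ for $r\ge0$, and inversion handles $r<0$. The functions $\theta\mapsto\cos r\theta$ and $\sin r\theta$ on $[-\pi,\pi]$ are restrictions of real-analytic functions to a compact interval, and these are available in $\RQ$ since it expands $\Ran$. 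Finally, the polar-coordinate map $(x,y)\mapsto(\rho,\theta)$ on the slit disk is definable, because $\theta$ is a restricted-analytic or semialgebraic function of $(x,y)$ away from the slit.

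It remains to combine the two factors. The product $z^r\Phi(z)$ is holomorphic and definable on $\Delta_\delta$ for $\delta\le\delta_1$, and it agrees with $f$ on $(0,\delta)$, which completes the proof.

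The hard analytic step, namely definability of $\Phi$ on the whole principal sheet rather than on a thin sector, has already been carried out in Lemma~\ref{lem:principal}. What remains requires only care with two points. The first is the definability of $z^r$: if $\RQ$ is only known to define $x^r$ on bounded intervals, then $\rho^r$ is needed only on $(0,\delta)$, which suffices. The second is making sure the extension is holomorphic in the usual sense on $\Delta_\delta$, as opposed to merely on $\Lsurf$; this is immediate from the principal-branch identification above.
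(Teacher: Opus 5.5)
Your proof is correct and follows the paper's own route. Like the paper, you write $f=x^rg$ by unary preparation, apply Lemma~\ref{lem:principal} to a quadratic-domain representative $\Phi$ of $g$, and check that the principal branch $z^r=|z|^r(\cos(r\arg z)+i\sin(r\arg z))$ is $\RQ$-definable using real powers, restricted sine and cosine on bounded intervals, and a definable principal argument; your extra remarks, that $\rho^r$ is needed only on $(0,\delta)$ and that holomorphy follows from the principal-branch identification, are harmless refinements.
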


\begin{proof}
Let $f:(0,\varepsilon)\to\R$ be $\RQ$-definable.  The zero germ is
immediate.  Otherwise, the unary preparation theorem
\cite[Theorem~B]{KRS} gives, after shrinking the interval,
\[
  f(x)=x^rg(x),
  \qquad r\in\R,\quad g\in\mathcal Q,\quad g(0)\neq0.
\]
Choose a complex representative $\Phi\in\mathcal Q(U)$ of $g$.  It is real
on the positive axis, and Lemma~\ref{lem:principal} makes its restriction
to a small principal slit disk both holomorphic and $\RQ$-definable.

The structure $\RQ$ contains $\Ran$ and defines every real power function
\cite[Introduction]{KRS}.  On a principal slit disk,
\[
  z^r=|z|^r\bigl(\cos(r\arg z)+i\sin(r\arg z)\bigr).
\]
The modulus is semialgebraic.  The principal argument is definable on a
finite semialgebraic partition by restricted arctangent.  Since
$\arg z\in(-\pi,\pi)$, the sine and cosine are evaluated on bounded
intervals and are restricted analytic functions.  Thus the principal
branch of $z^r$ is $\RQ$-definable.  The product $z^r\Phi(z)$ is the
required definable holomorphic extension of $f$.
\end{proof}



\section{Dulac convergence and analytic coordinates}\label{sec:coordinates}
For a positive germ $f(x)\to0$ as $x\to0^+$, put
\[
  \zeta=-\log x,
  \qquad
  D_f(\zeta):=-\log f(e^{-\zeta}).
\]
We use right half-planes; replacing $\zeta$ by $-\zeta$ gives the usual
left-half-plane convention.

\begin{definition}
\label{def:conv-dulac}
A \emph{convergent log-free Dulac representation} of $D_f$ is an identity
\begin{equation}
\label{eq:conv-dulac}
  D_f(\zeta)=\lambda\zeta+c+L(e^{-\zeta})
\end{equation}
for all sufficiently large real $\zeta$, where $\lambda>0$, $c\in\R$, and
$L\in\Pnat$ is convergent with $L(0)=0$.
\end{definition}

If $L(X)=\sum c_\alpha X^\alpha$ converges at a radius $\rho>0$, then
$L(e^{-\zeta})=\sum c_\alpha e^{-\alpha\zeta}$ converges normally on every
closed half-plane $\operatorname{Re}\zeta\geq R$ with $e^{-R}<\rho$.  Thus
the right-hand side of~\eqref{eq:conv-dulac} is holomorphic on a far right
half-plane.

\begin{proposition}\label{prop:power-dulac-equivalence}
Let $f:(0,\varepsilon)\to(0,\infty)$ tend to $0$.  The following are
equivalent.
\begin{enumerate}
\item The germ $f$ is represented by a convergent natural-support
generalized power series with positive leading coefficient and positive
leading exponent.
\item The logarithmic germ $D_f$ has a convergent log-free Dulac
representation.
\end{enumerate}
\end{proposition}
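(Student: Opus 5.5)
Both directions are proved by passing between $f$ and $D_f$ through $\exp$ and $\log$ applied to a convergent natural-support series whose leading monomial has been factored out, using the closure results of Section~\ref{sec:natural}.

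\emph{(1)$\Rightarrow$(2).} Suppose $f(x)=F(x)$ for small $x>0$, with $F\in\Pnat$ convergent, $\lambda=\val(F)>0$, and $a=\lc(F)>0$. Factor
\[
  F=aX^\lambda(1+H),
\]
where $H=0$ or $H\in\Pnat$ is convergent with $\val(H)>0$. The convergence of $H$ follows from $\norm{H}_\rho\le a^{-1}\rho^{-\lambda}\norm{F}_\rho$. Take $A(T)=\log(1+T)$ in Lemma~\ref{lem:functional-calculus}. This gives a convergent $L_0:=\log(1+H)\in\Pnat$ with $L_0(0)=0$. As in the proof of Proposition~\ref{prop:convergent-locus}, we have $\norm{H}_r\to0$ as $r\to0^+$. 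Hence for small $x$ the formal composition evaluates to the actual function $\log(1+H(x))$, by absolute convergence and rearrangement. Therefore
\[
  -\log f(x)=-\lambda\log x-\log a-L_0(x).
\]
Substituting $x=e^{-\zeta}$ gives $D_f(\zeta)=\lambda\zeta+c+L(e^{-\zeta})$ with $c=-\log a$ and $L=-L_0$, which is \eqref{eq:conv-dulac}.

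\emph{(2)$\Rightarrow$(1).} From \eqref{eq:conv-dulac}, setting $x=e^{-\zeta}$ gives
\[
  f(x)=e^{-c}x^\lambda\exp(-L(x))
\]
for small $x>0$. Apply Lemma~\ref{lem:functional-calculus} with $A(T)=e^{-T}$ and $H=L$; this is allowed because $L(0)=0$ forces $L=0$ or $\val(L)>0$. We obtain a convergent $E:=\exp(-L)\in\Pnat$ with constant term $1$. Then $F:=e^{-c}X^\lambda E$ is convergent with natural support $\lambda+\supp(E)$. Its leading exponent is $\lambda>0$ and its leading coefficient is $e^{-c}>0$. As before, $F(x)=f(x)$ for small $x$, because the evaluated composition agrees with $\exp(-L(x))$.

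\emph{Main obstacle.} The only delicate point is justifying that formal composition commutes with evaluation: the function $\sum_n a_nH(x)^n$ must equal the sum of the collected series $A(H)$ at the point $x$. The plan is to choose $r$ with $\norm{H}_r$ inside the disk of convergence of $A$. For $0<x\le r$, the double sum $\sum_n\sum |a_n|\,|\text{coeff}|\,x^\alpha$ is then bounded by $\sum_n|a_n|\norm{H}_r^n<\infty$, so Fubini for absolutely convergent series permits regrouping by exponent.

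A second routine check concerns the equality $L(0)=0$ in Definition~\ref{def:conv-dulac}. It must be read as meaning that the constant term of $L$ vanishes, so that the term $\lambda\zeta+c$ captures the whole leading monomial.
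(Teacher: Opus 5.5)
Your proposal is correct and follows the paper's proof. You factor $F=aX^\lambda(1+H)$ and apply Lemma~\ref{lem:functional-calculus} with $\log(1+T)$ for (1)$\Rightarrow$(2), and with $e^{-T}$ for (2)$\Rightarrow$(1). Your extra checks, namely the convergence of $H$ from that of $F$ and the agreement of the collected series with the pointwise composition, are details the paper leaves to the absolute-convergence remark at the end of that lemma's proof.
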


\begin{proof}
Suppose first that
\[
  f(x)=F(x),
  \qquad
  F(X)=aX^\lambda(1+H(X)),
\]
where $a>0$, $\lambda>0$, and either $H=0$ or $\val(H)>0$.  If
$H\neq0$, choose a small radius $r$ with $\norm{H}_r<1$.  By
Lemma~\ref{lem:functional-calculus},
\[
  -\log(1+H)=\sum_{n\geq1}\frac{(-1)^n}{n}H^n
\]
is a convergent element of $\Pnat$; the case $H=0$ is immediate.  Hence,
for all sufficiently large real $\zeta$,
\[
  D_f(\zeta)
  =\lambda\zeta-\log a-\log(1+H(e^{-\zeta})),
\]
which has the form~\eqref{eq:conv-dulac}.

Conversely, suppose
$D_f(\zeta)=\lambda\zeta+c+L(e^{-\zeta})$.  The functional calculus gives
a convergent natural-support power series $\exp(-L)$ with constant term
$1$.  Therefore
\[
  f(x)=\exp(-D_f(-\log x))
      =e^{-c}x^\lambda\exp(-L(x))
\]
is represented by a convergent natural-support generalized power series.
\end{proof}

The next result shows that this property does not depend on analytic
coordinates on the transversals.

\begin{theorem}\label{thm:coordinate-invariance}
Let $d:(0,\varepsilon)\to(0,\varepsilon')$ be positive and tend to $0$.
Let $\phi$ and $\psi$ be orientation-preserving real-analytic
diffeomorphism germs at $0$, fixing $0$, and put
$
  \widetilde d=\psi\circ d\circ\phi^{-1}.
$
Then the following are equivalent.
\begin{enumerate}
\item $d$ has a convergent natural-support generalized power-series
representation.
\item $\widetilde d$ has a convergent natural-support generalized
power-series representation.
\item $D_d$ has a convergent log-free Dulac representation.
\item $D_{\widetilde d}$ has a convergent log-free Dulac representation.
\end{enumerate}
\end{theorem}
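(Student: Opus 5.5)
The plan is to reduce everything to the equivalence (1)$\Leftrightarrow$(2). The equivalences (1)$\Leftrightarrow$(3) and (2)$\Leftrightarrow$(4) are instances of Proposition~\ref{prop:power-dulac-equivalence}, with one preliminary remark. A convergent natural-support representation of a positive germ tending to $0$ automatically has positive leading coefficient and positive leading exponent. Indeed, by Lemma~\ref{lem:leading} we have $d(x)=c_{\alpha_0}x^{\alpha_0}+o(x^{\alpha_0})$. Positivity forces $c_{\alpha_0}>0$, and $d\to0$ forces $\alpha_0>0$; the case $\alpha_0=0$ is excluded because then $d\to c_{\alpha_0}\neq0$. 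The same remark applies to $\widetilde d$, which is also positive and tends to $0$, since $\psi,\phi$ are orientation preserving and fix $0$.

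For (1)$\Rightarrow$(2), suppose $d=F$ with $F=aX^\lambda(1+H)\in\Pnat$ convergent, $a,\lambda>0$. First handle $\phi^{-1}$. It is an analytic orientation-preserving germ fixing $0$, so it can be written $\phi^{-1}(x)=bx(1+u(x))$ with $b>0$ and $u$ a convergent power series with $u(0)=0$. Lemma~\ref{lem:analytic-substitution} then shows that $F\circ\phi^{-1}$ is a convergent natural-support generalized power series. Evaluation commutes with formal substitution because all series involved converge absolutely on a small interval, and that lemma's proof was carried out in coefficient norm. Its leading term is again $ab^\lambda X^\lambda$, with positive exponent.

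Next apply $\psi$ on the outside. Write $\psi(y)=\sum_{n\geq1}p_ny^n=A(G)$, where $G=F\circ\phi^{-1}$ and $A(T)=\sum p_nT^n$ is convergent. Since $\val(G)=\lambda>0$, Lemma~\ref{lem:functional-calculus} applies to $H:=G$ and shows that $A(G)$ is a convergent element of $\Pnat$. The identity $\widetilde d(x)=A(G(x))$ holds for small $x$, again by absolute convergence: $\norm{G}_r\to0$ as $r\to0$, so $\sum|p_n|\norm{G}_r^n<\infty$ and rearrangement is allowed. This gives (2).

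For (2)$\Rightarrow$(1), note that $d=\psi^{-1}\circ\widetilde d\circ\phi$, where $\psi^{-1}$ and $\phi$ are again orientation-preserving analytic diffeomorphism germs fixing $0$. The argument above therefore applies symmetrically.

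The only delicate point is justifying that pointwise composition of functions equals the formal compositions. The step I expect to need care is the substitution $F\circ\phi^{-1}$: one must check that the expansion $(1+u)^\alpha=\exp(\alpha\log(1+u))$, summed over $\alpha\in\supp F$ and rearranged into exponents $A+\N$, converges absolutely at a positive radius. This is exactly the estimate $\sum|c_\alpha|(bre^L)^\alpha<\infty$ in Lemma~\ref{lem:analytic-substitution}, so Fubini for absolutely convergent sums justifies the identification of functions. Everything else is bookkeeping of leading terms.
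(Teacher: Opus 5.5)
Your proposal is correct and follows the paper's proof essentially step for step. You use Lemma~\ref{lem:analytic-substitution} for the inner substitution by $\phi^{-1}$ and Lemma~\ref{lem:functional-calculus} for the outer composition with $\psi$; you apply the latter to the full power series of $\psi$ rather than to the factor $v$ in $\psi(y)=by(1+v(y))$, which is immaterial. You then get the converse by symmetry via $d=\psi^{-1}\circ\widetilde d\circ\phi$, and obtain (3) and (4) from Proposition~\ref{prop:power-dulac-equivalence}. Your preliminary use of Lemma~\ref{lem:leading} shows that any such representation automatically has positive leading coefficient and positive leading exponent, which makes explicit a point the paper takes for granted when it simply assumes $F$ has positive valuation.
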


\begin{proof}
Write
\[
  \phi^{-1}(x)=ax(1+u(x)),
  \qquad a>0,\quad u(0)=0,
\]
and
\[
  \psi(y)=by(1+v(y)),
  \qquad b>0,\quad v(0)=0.
\]
Suppose that $d$ is represented by a convergent natural-support series
$F$ of positive valuation.  Lemma~\ref{lem:analytic-substitution} shows
that $F\circ\phi^{-1}$ is convergent.  It still has positive valuation, so
Lemma~\ref{lem:functional-calculus}, applied to the ordinary analytic
series $v$, gives
\[
  \psi(F\circ\phi^{-1})
  =b(F\circ\phi^{-1})\bigl(1+v(F\circ\phi^{-1})\bigr),
\]
again a convergent natural-support generalized power series.  This proves
(1)$\Rightarrow$(2).

For the reverse implication, write
$
  d=\psi^{-1}\circ\widetilde d\circ\phi.
$
Both $\phi$ and $\psi^{-1}$ have the same analytic form used above, so the
same substitution argument applies directly to this expression.  Thus
(1) and (2) are equivalent.  Proposition
\ref{prop:power-dulac-equivalence} gives their equivalence with (3) and
(4).
\end{proof}

Equivalently, if
$
  \Phi(\zeta)=-\log\phi(e^{-\zeta}),
$ and $
  \Psi(\eta)=-\log\psi(e^{-\eta}),
$
then
\[
  D_{\widetilde d}=\Psi\circ D_d\circ\Phi^{-1}.
\]
The theorem says that convergence of a log-free Dulac representation is preserved by these analytic conjugacies.





\section{A divergent nonresonant saddle transition map}
\label{sec:saddles}

Let $p$ be a hyperbolic saddle of a real-analytic planar vector field.
Choose oriented analytic incoming and outgoing transversals and analytic
coordinates on them.  The local transition map
$
  d:(0,\varepsilon)\longrightarrow(0,\varepsilon')
$
is positive and tends to $0$.  Dulac's asymptotic expansion has the form
\begin{equation}
\label{eq:dulac-general}
  d(x)\sim p_0x^{\nu_0}+
  \sum_{j=1}^{\infty}p_j(\log x)x^{\nu_j},
  \qquad
  0<\nu_0<\nu_1<\cdots,\quad \nu_j\to\infty,
\end{equation}
where $p_0>0$ and the $p_j$ are real polynomials; see
\cite[pp.~2--3]{KRS}.  When the eigenvalue ratio is irrational, the saddle
is nonresonant and all $p_j$ are constant.  In that case
\eqref{eq:dulac-general} is a natural-support generalized power series.

We first recall the classical source of divergence.  The distinction between
analytic equivalence and orbital analytic equivalence is important here,
because transition maps depend only on the orbit foliation.

\begin{theorem}\label{thm:classical-input}
There is a real-analytic nonresonant hyperbolic saddle whose local
transition map has a divergent Dulac series.  This conclusion is independent
of the analytic coordinates chosen on the incoming and outgoing
transversals.
\end{theorem}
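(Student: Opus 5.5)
The proof will rest on Trifonov's results~\cite{Trifonov}, as the introduction announces, together with Theorem~\ref{thm:coordinate-invariance}. The first step will take from Trifonov a real-analytic vector field with a hyperbolic saddle $p$ whose two separatrices close into a loop and whose eigenvalue ratio $\lambda$ is irrational. Trifonov's theorem gives a criterion for the monodromy of such a loop to have a divergent Dulac series, and his realization theorem shows that this criterion can be met. Here the monodromy is the first-return map $\Delta$ on a transversal to the loop. Irrationality makes the saddle nonresonant, so every polynomial $p_j$ in~\eqref{eq:dulac-general} is constant. The Dulac series of $\Delta$ is therefore a natural-support generalized power series with positive leading exponent.

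The second step will localize the divergence from the loop to the saddle. Choose analytic transversals $\sigma^-$ and $\sigma^+$ near $p$ on the incoming and outgoing separatrices. The monodromy then factors as
\[
  \Delta=R\circ d,
\]
where $d:\sigma^-\to\sigma^+$ is the local saddle transition and $R:\sigma^+\to\sigma^-$ is the regular transition along the remainder of the loop. Since $R$ follows a flow box away from singular points, it is a real-analytic orientation-preserving diffeomorphism germ fixing $0$. Theorem~\ref{thm:coordinate-invariance}, applied with $\phi=\mathrm{id}$ and $\psi=R$, then shows that $d$ has a convergent natural-support representation exactly when $\Delta=R\circ d$ does. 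This is the same simultaneity statement as Trifonov's localization argument, now derived from the paper's own invariance theorem. To reach the conclusion that the Dulac series of $d$ diverges, two further identifications are needed:
\begin{itemize}
  \item the Dulac series of $d$ is the asymptotic expansion of $d$, so if that series converged then $d$ would equal its sum and be represented by it;
  \item if $d$ has any convergent representation, Proposition~\ref{prop:rigidity} forces it to equal the Dulac series.
\end{itemize}
Since the Dulac series of $\Delta$ diverges, so does that of $d$.

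The third step proves coordinate independence. Any other choice of analytic coordinates on the transversals replaces $d$ by $\psi\circ d\circ\phi^{-1}$, where $\phi$ and $\psi$ are analytic germs. Theorem~\ref{thm:coordinate-invariance} shows that this replacement preserves convergent representability. Combined with the rigidity argument just described, convergence of the Dulac series is therefore unaffected by the choice of coordinates.

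I expect the main obstacle to be importing Trifonov's divergence statement in exactly the form needed. His results may be phrased as non-convergence of the Dulac series in the coordinate $\zeta=-\log x$, rather than as the absence of a convergent representation for $d$. Proposition~\ref{prop:power-dulac-equivalence} should bridge the two formulations. One must also check that $R$ is genuinely analytic at $0$ with positive derivative, which holds because it is a regular transition map in a flow box.
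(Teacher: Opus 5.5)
Your route differs from the paper's, which cites Trifonov for the existence, for the loop-to-saddle localization, and for the coordinate independence. Replacing the last two by Theorem~\ref{thm:coordinate-invariance} is a reasonable economy. However, your first identification contains a genuine gap: ``the Dulac series of $d$ is the asymptotic expansion of $d$, so if that series converged then $d$ would equal its sum.'' This inference is false in general. A flat germ such as $e^{-1/x}$ has the convergent expansion $0$ but is not equal to its sum.

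Your argument cannot avoid this step. Theorem~\ref{thm:coordinate-invariance} and Proposition~\ref{prop:power-dulac-equivalence} concern germs \emph{represented} by convergent series. The statement you are proving asserts \emph{formal} divergence of the Dulac series. Suppose you want to pass from ``the Dulac series of $\Delta$ diverges'' to ``the Dulac series of $d$ diverges'', or between two coordinate choices. In the contrapositive you must turn convergence of the formal series of $d$ into a representation of $d$. That step is exactly quasianalyticity: a transition map is determined by its Dulac series. This is Ilyashenko's theorem, and here it is encoded in the injectivity of $T$ on $\mathcal Q$. Proposition~\ref{prop:rigidity} supplies only the opposite direction. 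Without the first identification, your argument proves only something weaker: $d$ has no convergent representation in any analytic coordinates.

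There are two ways to close the gap.
\begin{enumerate}
\item Invoke quasianalyticity explicitly. With transversal coordinates satisfying condition~(D), Kaiser--Rolin--Speissegger show $d\in\mathcal Q$. Then $R\circ d$ and $\psi\circ d\circ\phi^{-1}$ are $\RQ$-definable. Their Dulac series equal their canonical expansions by Lemma~\ref{lem:formal-uniqueness}. Theorem~\ref{thm:convergence-criterion} then converts convergence of these series into representations.
\item Stay entirely formal. The Dulac series of $R\circ d$ and of $\psi\circ d\circ\phi^{-1}$ are the formal compositions of the Dulac series of $d$ with the Taylor series of $R$, $\psi$ and $\phi^{-1}$. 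The coefficient-norm estimates in Lemmas~\ref{lem:functional-calculus} and~\ref{lem:analytic-substitution} are statements about formal series. They therefore transfer convergence in both directions without ever asserting that a germ equals its sum.
\end{enumerate}

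Separately, the realization theorem does not by itself show that Trifonov's divergence criterion can be met. You also need a nonresonant saddle that is not orbitally analytically equivalent to its formal normal form. That input is Ilyashenko's small-divisor examples, with eigenvalue ratios too well approximated by rationals, which Trifonov recalls. The realization theorem then places such a saddle at the vertex of a loop.
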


\begin{proof}[Classical source]
Trifonov recalls Ilyashenko's small-divisor examples
\cite{Ilyashenko81} and explains that, for irrational eigenvalue ratios that
are too well approximated by rationals, Main Local Theorem~1 yields divergent
Dulac series~\cite[Introduction and Main Local Theorem~1]{Trifonov}.  That
theorem characterizes convergence of the loop monodromy series by orbital
analytic equivalence of the saddle germ to its formal normal form.
Trifonov's realization theorem places any analytic saddle germ, up to orbital
analytic equivalence, at the vertex of a real-analytic separatrix loop
\cite[Theorem~2]{Trifonov}.  The localization argument shows that the Dulac
series of the local saddle transition and the loop monodromy converge or
diverge together, and that this property is independent of the analytic
transversal coordinates~\cite[Section~1.1]{Trifonov}.  The nonresonant case
is treated explicitly in~\cite[Section~1.7(a)]{Trifonov}.  This proves the
theorem.
\end{proof}

Fix a saddle as in Theorem~\ref{thm:classical-input}.  Choose analytic
transversal coordinates satisfying condition~(D) of
Kaiser--Rolin--Speissegger~\cite{KRS}, and let $d$ be the corresponding
local transition germ.  The coordinate-independence in
Theorem~\ref{thm:classical-input} shows that its Dulac series is still
divergent.

Kaiser--Rolin--Speissegger show that $d$ belongs to the one-variable
$\mathcal Q$-class and that its restriction to every sufficiently small
positive interval is $\RQ$-definable; see
\cite[pp.~2--4 and the corollary following Theorem~B]{KRS}.  Its
$\mathcal Q$-expansion is the nonresonant Dulac series
\begin{equation}
\label{eq:Td}
  Td=\sum_{n=0}^{\infty}a_nX^{\mu_n},
  \qquad
  a_0>0,\quad
  0<\mu_0<\mu_1<\cdots,\quad \mu_n\to\infty.
\end{equation}
By Remark~\ref{rem:Tcan-on-Q}, this is the canonical expansion
$\Tcan(d)$.

\begin{proposition}
\label{prop:divergent-transition}
The canonical generalized power series $\Tcan(d)=Td$ is divergent.  The
germ $d$ has no convergent generalized Laurent-series representation with
well-ordered support.
\end{proposition}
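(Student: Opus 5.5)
The plan is to combine the classical divergence input with the rigidity results from Sections~\ref{sec:natural} and~\ref{sec:criterion}.

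\emph{Divergence of $Td$.} By Theorem~\ref{thm:classical-input}, the Dulac series of the chosen saddle transition diverges. This holds in any analytic transversal coordinates, in particular in the coordinates satisfying condition~(D) fixed above. I need to check that the \emph{Dulac series} in Trifonov's sense is the formal object $Td$ of \eqref{eq:Td}. Both are natural-support asymptotic expansions of the same germ $d$. Dulac's expansion \eqref{eq:dulac-general} in the nonresonant case is such an expansion, since all $p_j$ are constant. Lemma~\ref{lem:formal-uniqueness} then identifies it with $Td$. By Remark~\ref{rem:Tcan-on-Q}, $Td=\Tcan(d)$. Hence $\Tcan(d)$ has $\norm{\Tcan(d)}_\rho=\infty$ for every $\rho>0$.

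Here I must be careful that Trifonov's notion of convergence matches ours. His notion is convergence of the Dulac series, or equivalently of the log-free Dulac representation $D_d(\zeta)=\lambda\zeta+c+L(e^{-\zeta})$. Ours is coefficient-norm convergence. I expect this to be the main obstacle. Proposition~\ref{prop:power-dulac-equivalence} handles the translation: a convergent log-free Dulac representation of $D_d$ is equivalent to a convergent natural-support representation of $d$ with positive leading data. So if $Td$ converged, Theorem~\ref{thm:convergence-criterion} would give $d=Td$ near $0^+$. Proposition~\ref{prop:power-dulac-equivalence} would then give a convergent Dulac representation of $D_d$, whose formal expansion is the logarithmic transform of $Td$. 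This contradicts Theorem~\ref{thm:classical-input}, read through Theorem~\ref{thm:coordinate-invariance} so that the choice of coordinates is irrelevant. If Trifonov's convergence is phrased for the series in $x$ directly, the translation step is unnecessary.

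\emph{No convergent representation at all.} Suppose $d$ were represented near $0^+$ by a convergent generalized Laurent series $F$ with arbitrary well-ordered support. The germ $d$ lies in $\Hfield$ and has the natural asymptotic expansion $\Tcan(d)$ by Theorem~\ref{thm:canonical-map}. So Proposition~\ref{prop:rigidity}, or equivalently Theorem~\ref{thm:convergence-criterion}, forces $F=\Tcan(d)$, and in particular $\Tcan(d)$ converges. This contradicts the first step, which proves the second sentence of the proposition.
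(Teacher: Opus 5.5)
Your proposal is correct and follows essentially the paper's proof: assume $Td$ converges, apply Theorem~\ref{thm:convergence-criterion} to get $d=Td$ near $0^+$, contradict Theorem~\ref{thm:classical-input}, and derive the second sentence from rigidity. The only difference is how you connect the convergent sum to Trifonov's notion of a convergent Dulac series. The paper uses the identity theorem to make the sum agree with the holomorphic $\mathcal Q$-representative of $d$ on a complex overlap. You instead identify the Dulac series with $Td$ by Lemma~\ref{lem:formal-uniqueness} and pass to the logarithmic chart via Proposition~\ref{prop:power-dulac-equivalence}, and either bridge suffices.
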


\begin{proof}
Suppose that $Td$ converges at some radius.  By
Theorem~\ref{thm:convergence-criterion}, the germ $d$ is then represented by
$Td$.  Writing $Td=\sum_n a_nX^{\mu_n}$, coefficient-norm convergence
implies normal convergence of $\sum_n a_nz^{\mu_n}$ on compact subsets of
a logarithmic disk.  Its sum is therefore holomorphic there.  On the other
hand, the $\mathcal Q$-representative of $d$ is holomorphic on a standard
quadratic domain.  The two functions agree on a positive real interval, so
the identity theorem makes them agree on the connected component of the
overlap containing that interval.  Thus $Td$ is a convergent Dulac-series
representation of the transition map, contrary to
Theorem~\ref{thm:classical-input}.  Hence $Td$ diverges.  The final assertion
follows from Theorem~\ref{thm:convergence-criterion}.
\end{proof}

\begin{corollary}
\label{cor:proper-convergent-locus}
The ordered differential subfield $\Hconv$ is a proper subfield of
$\Hfield$.
\end{corollary}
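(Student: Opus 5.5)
The plan is to exhibit a single germ of $\Hfield$ that lies outside $\Hconv$. The natural candidate is the transition germ $d$ fixed before Proposition~\ref{prop:divergent-transition}.

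First, I will confirm that $d\in\Hfield$. Kaiser--Rolin--Speissegger show that $d$ belongs to the one-variable $\mathcal Q$-class and that its restriction to every sufficiently small positive interval is $\RQ$-definable. Its germ at $0^+$ is therefore an element of $\Hfield$.

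Second, I will check that $\Tcan(d)$ diverges. By Remark~\ref{rem:Tcan-on-Q}, $\Tcan(d)=Td$, and Proposition~\ref{prop:divergent-transition} shows that this series converges at no positive radius. By the definition of $\Hconv$, this gives $d\notin\Hconv$.

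Third, I will combine this with Proposition~\ref{prop:convergent-locus}, which already establishes that $\Hconv$ is an ordered differential subfield of $\Hfield$. Since $d\in\Hfield\setminus\Hconv$, the inclusion $\Hconv\subseteq\Hfield$ is strict, and $\Hconv$ is a proper subfield.

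I expect no real obstacle here. All the analytic content sits upstream: Theorem~\ref{thm:classical-input} from Trifonov's work, and the identification of $Td$ with the Dulac series in Proposition~\ref{prop:divergent-transition}. The one point to state explicitly is that membership in $\Hconv$ is tested on the canonical series $\Tcan$, not on $T$. Remark~\ref{rem:Tcan-on-Q} settles this because $d\in\mathcal Q$.
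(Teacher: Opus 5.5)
Your proposal is correct and follows the paper's own proof: the transition germ $d$ lies in $\Hfield$ by the Kaiser--Rolin--Speissegger definability result, and Proposition~\ref{prop:divergent-transition} (with Remark~\ref{rem:Tcan-on-Q} giving $\Tcan(d)=Td$) shows that $\Tcan(d)$ diverges, so $d\notin\Hconv$. Your explicit check that membership in $\Hconv$ is tested on $\Tcan$ rather than $T$ is a point the paper leaves implicit, and it is handled correctly.
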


\begin{proof}
The transition germ $d$ belongs to $\Hfield$ but not to $\Hconv$ by
Proposition~\ref{prop:divergent-transition}.
\end{proof}

Corollary~\ref{cor:proper-convergent-locus} completes the proof of
Theorem~\ref{thm:A}.  Theorem~\ref{thm:RQ-branching} and
Proposition~\ref{prop:divergent-transition} prove Theorem~\ref{thm:B}.


\section{Consequences}
\label{sec:consequences}

\subsection{Formal and convergent expansions}

Dembner asks in~\cite[Question~3.17]{Dembner} whether every unary germ
definable in a branching structure admits a generalized power-series
expansion.  He clarified to the author that ``expansion'' is intended to mean
a convergent generalized power-series representation.  The following
corollary gives a negative answer and records the contrasting formal result
for $\RQ$.

\begin{corollary}
\label{cor:dembner-answer}
In the branching structure $\RQ$, the following hold.
\begin{enumerate}
\item Every unary definable germ has a unique formal natural-support
generalized Laurent expansion.
\item Every bounded unary definable germ has a unique formal natural-support
generalized power-series expansion.
\item There is a positive bounded unary definable germ with no convergent
generalized power-series representation.
\end{enumerate}
Consequently, the answer to the question posed
in~\cite[Question~3.17]{Dembner} is negative.  By contrast, every bounded
unary germ in $\RQ$ has a canonical formal generalized power-series
expansion.
\end{corollary}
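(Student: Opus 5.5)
The corollary assembles earlier results, so the plan is to cite them item by item and check that every hypothesis is satisfied. First, the framing: $\RQ$ is branching by Theorem~\ref{thm:RQ-branching}, so the question in~\cite[Question~3.17]{Dembner} really is posed for $\RQ$.

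For item (1), take an arbitrary unary definable germ $f\in\Hfield$. Theorem~\ref{thm:canonical-map} shows that $\Tcan(f)\in\Gnat$ is an asymptotic expansion of $f$. Lemma~\ref{lem:formal-uniqueness} shows that any other natural-support expansion must coincide with it.

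For item (2), let $f$ be bounded. Corollary~\ref{cor:bounded-formal} gives $\Tcan(f)\in\Pnat$. Uniqueness again follows from Lemma~\ref{lem:formal-uniqueness}, since a power-series expansion is in particular a Laurent expansion.

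For item (3), use the transition germ $d$ of Section~\ref{sec:saddles}. Three properties need checking:
\begin{enumerate}
\item $d$ is $\RQ$-definable, as recalled from~\cite{KRS}.
\item $d$ is positive and bounded, since it maps $(0,\varepsilon)$ into $(0,\varepsilon')$.
\item $d$ has no convergent representation, by Proposition~\ref{prop:divergent-transition}.
\end{enumerate}
Proposition~\ref{prop:divergent-transition} excludes convergent generalized Laurent representations with arbitrary well-ordered support. A generalized power-series representation is a special case, so it is excluded as well. The fact that Dembner intended ``expansion'' to mean a convergent representation then converts item (3) into a negative answer to his question. The closing sentence of the corollary is simply item (2) restated.

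The only point needing care is the scope of item (3). The nonexistence claim must cover every well-ordered support, not just natural supports. This is exactly what the rigidity statement, Proposition~\ref{prop:rigidity}, supplies through Theorem~\ref{thm:convergence-criterion}: any convergent representation must equal $\Tcan(d)$, and $\Tcan(d)$ diverges. The argument is therefore complete without additional work.
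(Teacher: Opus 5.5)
Your proposal is correct and matches the paper's proof, which likewise derives item (1) from Theorem~\ref{thm:canonical-map}, item (2) from Corollary~\ref{cor:bounded-formal}, and item (3) from Proposition~\ref{prop:divergent-transition}. Your additional checks (branching via Theorem~\ref{thm:RQ-branching}, uniqueness via Lemma~\ref{lem:formal-uniqueness}, and coverage of arbitrary well-ordered supports via Theorem~\ref{thm:convergence-criterion}) make explicit what the paper leaves implicit.
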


\begin{proof}
The first assertion follows from Theorem~\ref{thm:canonical-map}, the second
from Corollary~\ref{cor:bounded-formal}, and the third from
Proposition~\ref{prop:divergent-transition}.
\end{proof}

\begin{remark}
The formal conclusion is specific to $\RQ$.  It does not show that every
unary germ in an arbitrary branching structure, or even in an arbitrary
polynomially bounded o-minimal structure, has a formal generalized
asymptotic expansion.  The counterexample in part~(3) is bounded and tends
to zero, so the negative answer is independent of whether Laurent monomials
are allowed.
\end{remark}

\subsection{Comparison with convergent-series structures}

\begin{corollary}
\label{cor:strict-inclusion}
There is a strict reduct inclusion
$
  \RanR\subsetneq\RQ.
$
Moreover, the transition germ $d$ from
Proposition~\ref{prop:divergent-transition} is not definable in
$\Ranstar$.
\end{corollary}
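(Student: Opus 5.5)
The inclusion $\RanR\subseteq\RQ$ holds because $\RQ$ expands $\Ran$ and defines every real power function $x^r$ on $(0,\infty)$, as recalled in the proof of Theorem~\ref{thm:RQ-branching}.  Both structures are expansions of the real field, so the inclusion of reducts follows once the generators of $\RanR$ are $\RQ$-definable.  Strictness will then come from the second assertion: $\RanR$ is a reduct of $\Ranstar$, so a germ not definable in $\Ranstar$ is in particular not definable in $\RanR$.  Since $d$ is $\RQ$-definable on small intervals, this gives $\RanR\neq\RQ$.

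For the non-definability of $d$ in $\Ranstar$, I will use a known convergent-series description of unary germs in $\Ranstar$, the structure generated by convergent generalized power series with natural support in the sense of van den Dries--Speissegger.  The relevant fact is the unary preparation/expansion theorem for $\Ranstar$: every unary definable germ at $0^+$ is, after shrinking, of the form $x^r G(x)$.  Here $G$ is the sum of a convergent generalized power series with natural support and $G(0)\neq0$.  I will cite this from the work introducing $\Ranstar$ (model completeness and o-minimality with a preparation theorem for one-variable definable functions).  Granting it, if $d$ were $\Ranstar$-definable, then $d$ would be represented near $0^+$ by the convergent natural-support series $X^rG$, with $r>0$ because $d\to0$ and $d>0$.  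This contradicts Proposition~\ref{prop:divergent-transition}, which says $d$ has no convergent generalized Laurent-series representation with well-ordered support.

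Equivalently, I could argue with Theorem~\ref{thm:convergence-criterion} and Proposition~\ref{prop:rigidity}.  Any convergent representation of $d$ would have to coincide with $\Tcan(d)=Td$, which diverges.  This route makes the naturalness of the competing support irrelevant, so it suffices that $\Ranstar$-germs admit convergent representations with well-ordered support.

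The main obstacle is justifying that every unary $\Ranstar$-germ is a convergent generalized series, rather than merely a composite of such series with field operations and inverses.  If a clean citation is not available, I will handle it structurally.  By Proposition~\ref{prop:convergent-locus} and its proof, convergent natural-support series form a field closed under differentiation, and by Lemmas~\ref{lem:functional-calculus} and~\ref{lem:analytic-substitution} they are closed under analytic substitution.  Combined with the known fact that unary $\Ranstar$-definable functions are piecewise given by terms in the generating series, possibly with compositional inverses, this would reduce the claim to closure of convergent natural-support series under these operations.  Closure under compositional inverse is the delicate part, and there I would fall back on the cited preparation theorem.
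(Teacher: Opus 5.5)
Your proof is correct, and your $\Ranstar$ argument is the paper's: the van den Dries--Speissegger one-variable preparation theorem writes a unary germ as a monomial times a convergent generalized power series with well-ordered support, and Proposition~\ref{prop:divergent-transition} rules this out for $d$.

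\textbf{Where you differ: strictness of $\RanR\subsetneq\RQ$.} The paper proves it directly. Assuming $d$ is $\RanR$-definable, Miller's unary preparation theorem gives $d(x)=x^{r_0}F(x^{r_1},\dots,x^{r_k})$, with $F$ an ordinary convergent power series and all $r_j>0$. Expanding this yields a convergent series whose support is visibly natural, which contradicts Theorem~\ref{thm:convergence-criterion} and Proposition~\ref{prop:divergent-transition}. You instead deduce strictness from the $\Ranstar$ statement via $\RanR\subseteq\Ranstar$.
\begin{itemize}
\item Your route is shorter and needs only one external preparation theorem.
\item However, it sends an elementary fact through the heavier $\Ranstar$ theory. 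It also leaves implicit why $\RanR$ is a reduct of $\Ranstar$. The reason is that $\Ranstar$ expands $\Ran$ and has $x^r$ on $[0,1]$ among its generating series, so the global power functions are definable.
\item The paper's version keeps the two assertions independent, and naturalness of the support is seen by hand.
\end{itemize}

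\textbf{One correction.} $\Ranstar$ is generated by convergent generalized power series with \emph{well-ordered} support, not natural support. Its unary preparation theorem produces $x^rG$ with $G$ of well-ordered support. For example, $\sum_{n\geq1}2^{-n}x^{1-1/n}$ is $\Ranstar$-definable but has no natural-support representation. Your conclusion is unaffected, because Proposition~\ref{prop:divergent-transition} already excludes convergent representations with arbitrary well-ordered support, as your third paragraph observes. For the same reason, the structural fallback in your last paragraph could not cover all $\Ranstar$-germs, since it is built only from convergent natural-support series. It is not needed.
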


\begin{proof}
The structure $\RQ$ contains $\Ran$ and defines every real power function
\cite[Introduction]{KRS}, so $\RanR$ is a reduct of $\RQ$.  Suppose that $d$ were definable in
$\RanR$.  By Miller's unary preparation theorem
\cite[Proposition~4.5]{Miller}, after shrinking one could write
\[
  d(x)=x^{r_0}F(x^{r_1},\ldots,x^{r_k}),
  \qquad r_i>0\quad(i\geq1),
\]
with $F$ an ordinary convergent power series.  Expanding $F$ gives
\[
  d(x)=\sum_{I\in\N^k}a_Ix^{r_0+r_1i_1+\cdots+r_ki_k}.
\]
Write $F(Y)=\sum_{I\in\N^k}a_IY^I$ and choose positive numbers
$R_1,\ldots,R_k$ in a polydisk of absolute convergence, so that
\[
  \sum_{I\in\N^k}|a_I|R_1^{i_1}\cdots R_k^{i_k}<\infty.
\]
For sufficiently small $x>0$ we have $x^{r_j}<R_j$ for all $j\geq1$, and
\[
 \sum_{I\in\N^k}|a_I|
 x^{r_0+r_1i_1+\cdots+r_ki_k}
 =x^{r_0}\sum_{I\in\N^k}|a_I|
   (x^{r_1})^{i_1}\cdots(x^{r_k})^{i_k}<\infty.
\]
Moreover, for every exponent bound $B$, the inequality
$r_0+r_1i_1+\cdots+r_ki_k\leq B$ permits only finitely many multi-indices,
because each $r_j>0$.  After collecting equal exponents, we therefore obtain
a convergent natural-support generalized Laurent series, contradicting
Theorem~\ref{thm:convergence-criterion} and
Proposition~\ref{prop:divergent-transition}.  Thus the reduct inclusion is
strict.

By the one-variable preparation theorem for $\Ranstar$
\cite[Theorem~B]{DS98} (see also the proof of
\cite[Proposition~3.14]{Dembner}), a unary definable germ is a monomial times
a convergent generalized power series with well-ordered support.  Applied
to $d$, this gives a convergent generalized Laurent series.  Since
$d(x)\to0$, its leading exponent is positive; after absorbing the leading
monomial into the exponents, one obtains a convergent generalized power
series.  Proposition~\ref{prop:divergent-transition} therefore shows that
$d$ is not $\Ranstar$-definable.
\end{proof}

\subsection{Definable complex curves}

The branching theorem also places $\RQ$ directly within the scope of
Dembner's geometric results.

\begin{corollary}
\label{cor:complex-curves}
The following statements hold for $\RQ$.
\begin{enumerate}
\item Let $Y$ be an irreducible compact complex curve and let
$X\subsetneq Y$ be a noncompact $\RQ$-definable open subset whose
normalization has no punctures.  Every definable coherent sheaf on $X$ has
vanishing higher cohomology, and every definable vector bundle on $X$ is
definably trivial.
\item Every $\RQ$-definable Riemann surface without definable punctures
admits a definable open embedding into a compact Riemann surface.  The
boundary may be chosen to be a finite union of simple closed curves,
with the surface connected at each boundary point.
\end{enumerate}
\end{corollary}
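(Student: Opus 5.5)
The plan is to reduce both assertions to Dembner's general theorems for branching o-minimal structures. Theorem~\ref{thm:RQ-branching} establishes that $\RQ$ is branching, so the task is to match our setting to his hypotheses. Dembner proves the vanishing of higher cohomology and the definable triviality of vector bundles on noncompact definable open subsets $X\subsetneq Y$ of compact curves whose normalization has no punctures. He also proves the definable compactification statement for definable Riemann surfaces without definable punctures. Both results are stated for an arbitrary o-minimal expansion of the real field that is branching in the sense of \cite[Definition~3.10]{Dembner}. Since $\RQ$ is an o-minimal expansion of $\R$ by \cite{KRS}, once Theorem~\ref{thm:RQ-branching} is in hand, parts (1) and (2) are direct specializations.

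The steps, in order, are as follows.

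First, I will recall that $\RQ$ is o-minimal and expands the real field, which is the ambient hypothesis in \cite{Dembner}.

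Second, I will invoke Theorem~\ref{thm:RQ-branching}. By that theorem, every unary definable germ extends to a definable holomorphic function on a principal slit disk $\Delta_\delta$.

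Third, I will check that Dembner's definitions apply verbatim once the structure is fixed: definable coherent sheaves, definable vector bundles, punctures and normalization. These notions are formulated relative to an arbitrary o-minimal structure, so nothing specific to $\RQ$ enters except branching.

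Fourth, I will cite Dembner's cohomology-vanishing and triviality theorem for part (1), and his definable embedding theorem for part (2). Part (2) includes the description of the boundary as finitely many simple closed curves, with the surface connected at each boundary point.

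The main obstacle is bookkeeping rather than mathematics. I must confirm that Dembner's geometric theorems really use only branching, together with o-minimality and the expansion of $\R$. In particular, they must not use extra hypotheses such as polynomial boundedness in a specific form, or the existence of convergent generalized power-series expansions. The latter would be fatal here, by Proposition~\ref{prop:divergent-transition}.

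If some step in \cite{Dembner} uses convergent expansions of unary germs, I would try to replace that input with the formal canonical expansion $\Tcan$ of Theorem~\ref{thm:canonical-map}, or directly with the slit-disk extension. The likely uses of such expansions are local: describing the germ of a definable holomorphic function near a boundary point, for instance to exclude essential singularities or to control ends. Those arguments need only the holomorphic extension to $\Delta_\delta$ together with polynomial growth bounds. $\RQ$ supplies the bounds, since it is polynomially bounded: the unary preparation $f=x^rg$ with $g(0)\neq0$ gives $|f(x)|\asymp x^r$. So I expect the citation to go through once this dependency is verified.
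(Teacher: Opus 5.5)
Your reduction is the same as the paper's: combine Theorem~\ref{thm:RQ-branching} with Dembner's Theorems~A and~C. The speculative fallback in your last paragraph is not needed. The one hypothesis to record more precisely is that $\RQ$ contains $\Ran$ (by \cite{KRS}), not merely that it expands the real field. The paper's proof checks exactly this fact alongside branching before invoking Dembner, so your claim that the definable complex-analytic notions are formulated for an arbitrary o-minimal structure should be adjusted accordingly.
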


\begin{proof}
The structure $\RQ$ contains $\Ran$ and is branching by
Theorem~\ref{thm:RQ-branching}.  The two assertions are therefore immediate
from Dembner's Theorems~A and~C, respectively~\cite{Dembner}.
\end{proof}

\section*{Acknowledgments}
The author would like to  thank Spencer Dembner for helpful discussions about
Question~3.17 of his paper \cite{Dembner} and for comments on an earlier draft.


\end{document}